\def\thickhrulefill{\leavevmode \leaders \hrule height 1ex \hfill \kern \z@}
\def\maketitle{
  \vspace*{0\p@}%
  {\parindent \z@ \centering \reset@font%
        \thickhrulefill \quad \scshape \@author \ -\ \@date \quad \thickhrulefill%
        \par\nobreak%
        \vspace*{10\p@}%
        \interlinepenalty\@M
        \hrule
        \vspace*{10\p@}%
        \Large \bfseries \@title \par\nobreak
        \par
        \vspace*{10\p@}%
        \hrule
    \vskip 30\p@
}} \makeatother
\def\blfootnote{\xdef\@thefnmark{}\@footnotetext}
\numberwithin{equation}{section}
\newtheoremstyle{rema}%
{10pt}%
{10pt}%
{}%
{}%
{\itshape}%
{\ --}%
{0.5em}%
{}
\newtheorem{lemma}{Lemma}[section]
\newtheorem{prop}[lemma]{Proposition}
\newtheorem{thm}[lemma]{Theorem}
\newtheorem{corr}[lemma]{Corollary}
\newtheorem{defin}[lemma]{Definition}
\theoremstyle{rema}
\newtheorem{rem}[lemma]{Remark}
\newcommand{\Ext}{\ensuremath{\text{Ext}}}
\newcommand{\Gal}{\ensuremath{\text{Gal}}}
\newcommand{\Hom}{\ensuremath{\text{Hom}}}
\newcommand{\Inv}{\ensuremath{\text{Inv}}}
\newcommand{\id}{\ensuremath{\text{id}}}
\newcommand{\SK}{\ensuremath{SK}}
\newcommand{\SKb}{\ensuremath{\textbf{SK}}}
\newcommand{\car}{\ensuremath{\text{char}}}
\newcommand{\ind}{\ensuremath{\text{ind}}}
\newcommand{\per}{\ensuremath{\text{per}}}
\newcommand{\Br}{\ensuremath{\text{Br}}}
\newcommand{\SL}{\ensuremath{\text{SL}}}
\newcommand{\GL}{\ensuremath{\text{GL}}}
\newcommand{\SLb}{\ensuremath{\textbf{SL}}}
\newcommand{\spec}{\ensuremath{\text{Spec}}}
\newcommand{\Symd}{\ensuremath{\text{Symd}}}
\newcommand{\kbar}{\ensuremath{\overline{k}}}
\newcommand{\GB}{\ensuremath{\textbf{G}}}
\newcommand{\HB}{\ensuremath{\textbf{H}}}
\newcommand{\PGSpB}{\ensuremath{\textbf{PGSp}}}
\newcommand{\PGLb}{\ensuremath{\textbf{PGL}}}
\newcommand{\AutB}{\ensuremath{\textbf{Aut}}}
\newcommand{\Aut}{\ensuremath{\text{Aut}}}
\newcommand{\Abar}{\ensuremath{\overline{A}}}
\newcommand{\Nrd}{\ensuremath{\text{Nrd}}}
\newcommand{\Nrp}{\ensuremath{\text{Nrp}}}
\newcommand{\Trd}{\ensuremath{\text{Trd}}}
\newcommand{\Trp}{\ensuremath{\text{Trp}}}
\newcommand{\Prd}{\ensuremath{\text{Prd}}}
\newcommand{\Prp}{\ensuremath{\text{Prp}}}
\newcommand{\kfields}{\ensuremath{k\text{-}\mathfrak{fields}}}
\newcommand{\Groups}{\ensuremath{\mathfrak{Groups}}}
\newcommand{\Sets}{\ensuremath{\mathfrak{Sets}}}
\newcommand{\SB}{\ensuremath{\text{SB}}}
\newcommand{\cd}{\ensuremath{\text{cd}}}
\newcommand{\rhokahn}{\ensuremath{\rho_{\text{Kahn}}}}
\newcommand{\rhokahna}[1]{\ensuremath{\rho_{\text{Kahn},#1}}}
\newcommand{\rhokmrt}{\ensuremath{\rho_{\text{BI}}}}
\newcommand{\rhokmrta}[1]{\ensuremath{\rho_{\text{BI},#1}}}
\newcommand{\rhosus}{\ensuremath{\rho_{\text{S06}}}}
\newcommand{\rhosusa}[1]{\ensuremath{\rho_{\text{S06},#1}}}
\newcommand{\rhosusoud}{\ensuremath{\rho_{\text{S91}}}}
\newcommand{\rhosusouda}[1]{\ensuremath{\rho_{\text{S91},#1}}}
\newcommand{\rhokahnga}[1]{\ensuremath{\tilde{\rho}_{\text{Kahn},#1}}}
\newcommand{\rhosusg}{\ensuremath{\tilde{\rho}_{\text{S06}}}}
\newcommand{\rhosusga}[1]{\ensuremath{\tilde{\rho}_{\text{S06},#1}}}
\newcommand{\rhorost}{\ensuremath{\rho_{\text{Rost}}}}
\newcommand{\rhorosta}[1]{\ensuremath{\rho_{\text{Rost},#1}}}
\newcommand{\pform}[1]{{\langle\!\langle{#1}\rangle\!\rangle}}
\newcommand{\Acal}{\ensuremath{\mathcal{A }}}
\newcommand{\Bcal}{\ensuremath{\mathcal{B }}}
\newcommand{\Hcal}{\ensuremath{\mathcal{H }}}
\newcommand{\Ocal}{\ensuremath{\mathcal{O }}}
\newcommand{\Qcal}{\ensuremath{\mathcal{Q }}}
\newcommand{\Tcal}{\ensuremath{\mathcal{T }}}
\newcommand{\Xcal}{\ensuremath{\mathcal{X }}}
\newcommand{\Fb}{\ensuremath{\mathbb{F }}}
\newcommand{\Gb}{\ensuremath{\mathbb{G }}}
\newcommand{\Qb}{\ensuremath{\mathbb{Q }}}
\newcommand{\Zb}{\ensuremath{\mathbb{Z }}}
\titleformat{\subsubsection}[runin]
  {\itshape}
  {\thesubsubsection}
  {5pt }
  {}[\ --]
\renewcommand{\thesubsubsection}{(\alph{subsubsection})}
\title{Comparing invariants of $\SKb_1$}
\author{Tim Wouters}
\date{March 18, 2010}
\begin{document}

\maketitle

\blfootnote{\ \\
K.U.Leuven -- Departement Wiskunde, Celestijnenlaan 200B bus 2400, B-3001 Leuven, Belgium --
tim@wouters.in
\\ 
\textit{2010 Mathematics Subject Classification:} 19B99 (12G05, 16K50, 17B20)\\
\textit{Keywords:} Reduced Whitehead Group -- Suslin's conjecture -- Cohomological invariants -- Symbol algebras}

\begin{abstract}
In this text, we compare several invariants of the reduced Whitehead group $\SKb_1$ of a central simple
algebra.  

For biquaternion algebras, we compare a generalised invariant of Suslin as constructed
by the author in \cite{wsuspos} to an invariant introduced by Knus-Merkurjev-Rost-Tignol \cite{kmrt}.  
Using explicit computations, we prove
these invariants are essentially the same.

We also prove the non-triviality of an invariant introduced by
Kahn \cite{kahnsk12}.   
To obtain this result, we compare Kahn's invariant to an invariant introduced by Suslin in 1991 \cite{suslinconj}  which is non-trivial
for Platonov's examples of non-trivial $\SK_1$ \cite{sk1niettriveng}.  
We also give a formula for the value on the centre of the tensor product of two symbol algebras which generalises a formula
of Merkurjev for biquaternion algebras \cite{merkrostinv}.
\end{abstract}

\section{Introduction} \label{sec:intro}

Let $k$ be a field and $A$ a central simple $k$-algebra.  The triviality of the \textit{reduced Whitehead
group} $\SKb_1(A)$ (which is isomorphic to $\SLb_1(A)/[A^\times,A^\times]$) is a long studied question.
Tannaka and Artin posed the question in the 1930's \cite{nakmat,wang}.  For more than 30 years, one tried
to prove the triviality of $\SKb_1(A)$ in full generality.  
In 1976, Platonov gave a counterexample using discrete valuation rings \cite[Thm. 5.19]{sk1niettriveng}.  Wang, however, did prove the triviality of 
$\SKb_1(A)$ if $\ind_k(A)$ is square-free \cite{wang}.   This inspired Suslin to conjecture that
this can be the only case of triviality \cite{suslinconj}.  This would give a sufficient answer to the question of Tannaka and Artin.
Merkurjev proved it is true when $4\,|\, \ind_k(A)$  \cite{mersuslinbiquat}; and Rehmann-Tikhonov-Yanchevski\u{\i} proved
it is sufficient to prove the conjecture for the tensor product of two symbol algebras \cite[Thm. 0.19]{rehmanea}.

In order to study his conjecture, Suslin conjectured in 1991 the existence of a cohomological invariant of $\SKb_1(A)$  with values in Galois cohomology ($n=\ind_k(A)\in k^\times$):
\begin{equation} 
\rho : \SKb_1 (A)(k) \to H^4(k, \mu_n^{\otimes 3} ) / (H^2(k,\mu_n^{\otimes 2}) \cup [A]), \label{eq:rhosusconj}
\end{equation}
where $[A]$ stands for the class of $A$ in $\phantom{ }_n\Br(k)\cong H^2(k,\mu_n)$ \cite[Conj. 11.6]{suslinconj}.  
There are various definitions of invariants of this flavour.  In 1991, Suslin defined twice his conjectured invariant
 (ibid., \S 2).  For biquaternion algebras Rost gave a closely related invariant \cite[Thm. 4]{merkrostinv}, and in 2006 Suslin
defined his conjectured invariant in full generality \cite[\S 6]{suslin}.  Kahn even generalised this invariant to a range of new invariants \cite[Cor. 8.4 \& Def. 11.3]{kahnsk12}.  

The restriction to central simple algebras with $n=\ind_k(A)\in k^\times$ is a natural one, since
otherwise the cohomology groups can be trivial and $\phantom{ }_n\Br(k)$ does not have to be isomorphic to $H^2(k,\mu_n)$.  Using Kato's
cohomology of \textit{logarithmic differentials} \cite{katogalcoh}, the author generalised any of the  aforementioned invariants to all central simple
algebras using a lift from positive characteristic to characteristic 0 \cite{wsuspos}.  We  recall the definitions
of the invariants in more detail in Section \ref{sec:recall}.  It is generally assumed that all defined invariants are essentially the same, but very few results exist on this subject.  In this paper, we compare some of them.

For biquaternion algebras,
Knus-Merkurjev-Rost-Tignol constructed a cohomological invariant of $\SKb_1(A)$ without the condition on the index \cite[\S 17]{kmrt}.  They use Witt groups, Witt rings, and a involution on the biquaternion algebra to define it.
If $\car(k)\neq 2$, they prove the invariant is essentially the same as Suslin's invariant for biquaternions.
Using the construction of the generalisation of Suslin's invariant, we prove that for base fields
of characteristic 2 their invariant essentially equals Suslin's generalised invariant (Section \ref{sec:biquat}).

In Section \ref{sec:compkahn}, we compare a new invariant of Kahn with all of the other existing invariants (or more correctly, we compare the invariants to Kahn's invariant).  
This allows us to prove the non-triviality of Kahn's invariant for Platonov's examples of non-trivial
$\SK_1$.  We also prove a formula for the value on the centre of the tensor product of two symbol algebras
under Kahn's invariant which generalises a formula  of Merkurjev for biquaternion algebras (\cite[Ex. p.70]{merkrostinv} -- see also \cite[Ex. 17.23]{kmrt}).  

\subsubsection*{Notations}
\addcontentsline{toc}{subsubsection}{Notations} 
Let us fix the following notations throughout this text.
\begin{itemize}
\item  If $k$ is a field, then $k_s$ denotes a separable closure and $\Gamma_k=\Gal(k_s/k)$ its absolute Galois
group.  Furthermore, denote $\Gb_m=\spec (\Zb[t,t^{-1}])$.
\item A prime factorisation $p_1^{e_1}\cdot \ldots\cdot
p_r^{e_r}$ of a (positive) integer $m$ is always supposed to primitive (i.e. $m=p_1^{e_1}\cdot \ldots\cdot
p_r^{e_r}$, with $p_i$ primes, $e_i\geq 1$ integers for $1\leq i \leq n$ and $p_i\neq p_j$ for any $1\leq i<j\leq r$)
\item We use standard notations for the following categories: the category $\Sets$ of set, the category $\kfields$ of field extensions of a field $k$, 
the category $\Groups$ of groups, and the category $\mathfrak{Ab}$ of abelian groups.
\item If $m>0$ is an integer (prime to $\car(k)$), then $\mu_m$ denotes the $\Gamma_k$-module of consisting of $m$-th roots of unity of $k_s$. If we want to stress the field in use, we write $\mu_m(k)$ (so that this can be viewed as the $k$-rational
points of the appropriate sheaf).
\item  The appearing cohomology groups are Galois cohomology groups (unless mentioned otherwise).
\item $\phantom{ }_m \Br(k)$ is the $m$-th torsion part of the Brauer group of $k$ ($m>0$ an integer).  If $K$ is a field
extension of $k$, we denote by $\Br(K/k)$ the kernel of the base extension morphism $\Br(k)\to \Br(K)$.
\item If $F$ is a discrete valuation field (with valuation $v$), then the valuation ring is denoted by $\Ocal_v$ and the residue field
by $\kappa(v)$.
If $x \in \Ocal_v$, we denote by $\bar{x}$ its class in $\kappa(v)$.  
We  also use this notation for other objects for which we can define (canonical) residues.
A discrete valuation is supposed to be non-trivial (of rank 1).  By $F_{\text{nr}}$ we denote the
maximal unramified extension of $F$.
\item If $A$ is a central simple $k$-algebra and if $F$ is a field extension of $k$, then $A_F=A\otimes_k F$ 
is the central simple $F$-algebra obtained from $A$ by base extension  to $F$.  More generally, for a ring  $R$, a commutative $R$-algebra $S$, and an Azumaya $R$-algebra $A$, we denote  $A_S=A\otimes_R S$, the Azumaya $S$-algebra obtained from $A$ by 
base extension to $S$.  By $[A]$ we denote the Brauer class of a central simple algebra/Azumaya algebra $A$.
\item\label{it:red} For any central simple algebra $A$ and $a\in A$, the reduced norm of $a$ is denoted as $\Nrd_{A/k}(a)$.  In the same way,
$\Trd_{A/k}(a)$ is the reduced trace and $\Prd_{A,a/k}(X)$ is the reduced characteristic polynomial.  
If \[ \Prd_{A,a/k}(X)=X^n-s_1(a)X^{n-1}+\ldots +(-1)^n s_n(a),\] then we know $\Nrd_{A/k}(a)=s_n(a)$ and $\Trd_{A/k}(a)=s_1(a)$.
\item For a central simple $k$-algebra, we denote by $\SLb_1(A)$ the usual linear algebraic group scheme.  If $F$ is a field
extension of $k$, the $F$-rational points of $\SLb_1(A)$ are given by
$ \SLb_1(A)(F) = \{ x\in A_F\, |\, \Nrd_{A/F}(a)=1\}$.
Furthermore, by $\SKb_1(A)$ we denote the group functor 
\[ \kfields \to \Groups : F \mapsto \SKb_1(A)(F)=SK_1(A_F)\cong \SLb_1(A)(F)/[A^\times,A^\times], \]
called the \textit{reduced Whitehead group of $A$}.
\end{itemize}

\subsubsection*{Acknowledgements}  The author thanks Jean-Louis Colliot-Th\'el\`ene, Philippe Gille, Bruno Kahn, Marc Levine, and Jean-Pierre Tignol for interesting comments
and discussions
leading to this article.  The author expresses his gratitude to the \'Ecole Normale Sup\'erieure (Paris), K.U.Leuven, 
and Research Foundation - Flanders (G.0318.06) for financial support.

\section{Existing invariants and results} \label{sec:recall}

In this section, we  recall the various invariants of $\SKb_1$ introduced by several authors.  All differ a little bit
on the value groups.  It takes some time to introduce all of them in quite a rigorous way.  The author excuses 
to the reader who is aware of all the definitions and results and hopes for his tolerance (and endurance).  
He believes it makes the text
more accessible for the non-expert.
Experts can eventually skip this section at first and come back to it if necessary to find e.g. a particular definition.  
Before recalling the invariants, we recall Merkurjev's viewpoint on invariants and Platonov's examples of non-trivial $\SK_1$ as both are used later on.

\subsection{Invariants \`a la Merkurjev} \label{sec:merkurjev}

For two group functors $\GB,\HB:\kfields \to \Groups$, an invariant of $\GB$ in $\HB$ is a 
natural transformation of functors of $\GB$ into $\HB$. Typically $\HB$ equals
the degree $j$ part $M_j$ of a cycle module $M$ (\`a la Rost \cite{rostmodcyc}),
such an invariant is called an \textit{invariant of $\GB$ in $M$ of degree $j$}.
It is clear that all invariants of $\GB$ in $\HB$ form a group (abelian if $\HB$ has images in $\mathfrak{Ab}$).  In 
case of  degree $j$ invariants of $\GB$ in a cycle module $M$, we denote this group by
$\Inv^j(\GB,M)$.   We can define the same terminology  if $M$ is any functor of graded groups.

\subsubsection{Cycle modules} \label{sec:defcycmod}
A cycle module $M$ having a field $k$ as base is a formal object having the shared properties of certain
Galois cohomology groups, Milnor's $K$-groups, \ldots\ It associates with any field extension $F$ of $k$ a graded
abelian group $(M_j)_{j\geq 0}$ endowed with four data (functoriality, reciprocity, $K$-theory module structure, and residues -- D1-D4 in ibid., Def. 1.1) satisfying some homological and geometrical
rules (R1a-R3e, FD, and C - ibid., Def. 1.1  \& 2.1).
For a field $k$, a central simple $k$-algebra $A$ of $n=\ind_k(A)\in k^\times$,
 and an integer $m\in k^\times$, we  use the following cycle modules (for any integer $r$): 
\begin{eqnarray}
\Hcal^\ast_m: \kfields \mapsto \mathfrak{Ab} &:& F \mapsto (H^j_m(F))_{j>0} \label{eq:hm} \\
\Hcal^\ast_{n,\Acal^{\otimes r}}: \kfields \mapsto \mathfrak{Ab} &:& F \mapsto (H^j_{n,A^{\otimes r}}(F))_{j> 1}, \label{eq:hna}
\end{eqnarray} 
with 
\begin{equation} \label{eq:cyclemod}
H^j_m(F)=H^j(F,\mu_m^{\otimes (j-1)}) \quad \text{ and } \quad
 H^j_{n,A^{\otimes r}}(F)=H^j_n(F)/ \bigl(H^{j-2}(F,\mu_n^{\otimes (j-2)}) \cup r[A]\bigr).  
\end{equation}
Remark that if $r\equiv 0 \mod \per_k(A)$, then $H^j_{n,A^{\otimes r}}(F)=H^j_{n}(F)$.  So the second cycle module
is actually a generalisation of the first one.

\subsubsection{Gersten complex}
Given a $k$-variety $X$ and a cycle module $M$ with base $k$, we have a Gersten
complex (ibid., \S 3.3) (for integers $i,j\geq 0$):
\[ \ldots \overset{\partial^{i-2}}{\to} 
\bigoplus_{x\in X^{(i-1)}} M_{j-i+1}(k(x)) \overset{\partial^{i-1}}{\to}
\bigoplus_{x\in X^{(i)}} M_{j-i}(k(x)) \overset{\partial^{i}}{\to}
\bigoplus_{x\in X^{(i+1)}} M_{j-i-1}(k(x)) \overset{\partial^{i+1}}{\to}
\ldots,
\]
induced by the residues of the cycle module.  Here, $X^{(i)}$ is the set
of points of $X$ of codimension $i$, $k(x)$ is the function field of a point of
codimension $i$, and any appearance of negative degree of the cycle module
is to be interpreted as the trivial group.
The homology of this complex on spot $i$ is denoted $A^i(X,M_j)$.

\subsubsection{Merkurjev's link}
Let $\GB$ be an algebraic $k$-group which we view  as a group functor
associating to a field extension $F$ of $k$, the group $\GB(F)$ of $F$-rational points  of $\GB$.
If $M$ is of bounded exponent, then Merkurjev gives an isomorphism 
\begin{equation} \label{eq:isomerk}
\Inv^j(\GB,M) \overset{\sim}{\to} A^0(\GB,M_j)_{\text{mult}} \subset A^0(\GB,M_j): \rho \mapsto \rho_{K}(\xi),
\end{equation}
where $K=k(\GB)$ is the function field of $\GB$ and $\xi\in \GB(K)$ the
generic point of $\GB$ \cite[Lem. 2.1 \& Thm. 2.3]{invalggroup}.  
The image $A^0(\GB,M_j)_{\text{mult}}$ consists of the \textit{multiplicative elements} of $A^0(\GB,M_j)$.  
These are those elements $x$ such that
$p_1^\ast(x) + p_2^\ast(x) = m^\ast(x)$ for $p_1^\ast,p_2^\ast,m^\ast$ induced by
the projection $p_1,p_2:\GB \times \GB \to \GB$ and multiplication $m:\GB \times \GB \to \GB$.

\subsection{Platonov's examples} \label{sec:platonov}

Among the examples of non-trivial $\SK_1$ of Platonov, we  concentrate on the tensor
product of two cyclic algebras.

\subsubsection{Cyclic algebras} \label{sec:cyclicalgs}
Let $k$ be a field and $K$ a cyclic field extension of degree $n$.  Take furthermore a generator $\sigma\in \Gal(K/k)\cong \Zb/n$.  Then for $b\in k^\times$, we denote by $(K/k,\sigma,b)$ the so-called \textit{cyclic $k$-algebra} 
generated by $K$ and a variable $x$ satisfying $x^n=b$ and $xc=\sigma(c)x$ for any $c\in K$.  
Then clearly $\deg_k(K/k,\sigma,b)=n$ and we can also write this cyclic algebra as
$\oplus_{i=0}^{n-1} K x^i$ with multiplication defined as above \cite[\S 7, Def. 4]{draxl}.   Furthermore,
$K$ is a splitting field of $(K/k,\sigma,b)$ (see \cite[\S 2.5]{gilleszam}).  

If $k$ contains an $n$-th primitive root of unity and
if $K=k(\sqrt[n]{a})$ for $a\in k^\times$, then $(K/k,\sigma,b)\cong (a,b)_n$ as $k$-algebras (if $\sigma$ is well chosen).  Here $(a,b)_n$ is the usual \textit{symbol $k$-algebra}
generated over $k$ by variables $x$ and $y$ satisfying $x^n=b,y^n=b$, and $xy=\xi_nyx$ for a well chosen primitive 
$n$-th root of unity $\xi_n\in k$.  In case $n=p=\car(k)$ and if $K$ is the cyclic Galois extension defined by $x^p-x-a$,
then $(K/k,\sigma,b)\cong [a,b)_p$ as $k$-algebras (for a well chosen $\sigma$).  Here $[a,b)_p$ is the usual \textit{$p$-algebra}: generated as $k$-algebra
by variables $x$ and $y$ satisfying $x^p-x=a,y^p=b$, and $xy=y(x+1)$ (loc. cit.).

If $n=2$, a symbol algebra or $p$-algebra is more commonly called a \textit{quaternion algebra}.
The product of two quaternion algebras is a \textit{biquaternion algebra}; it is a central simple algebra of
degree 4 and period 1 or 2.  It is know that biquaternion algebras are in fact the only central simple algebras of
degree 4 and period 1 or 2  \cite[p. 369]{albertbiquat}. 

\subsubsection{Non-trivial $\SK_1$} \label{sec:nontrivsk1}

Let $k$ be a local field (e.g. $\Qb_p$ or $\Fb_p((x))$) and let $K_1,K_2$ be two cyclic extensions of degree $n$ over
$k$ which are linearly disjoint.  Let $\sigma_1$ (resp. $\sigma_2$) be a generator of $\Gal(K_1/k)$ (resp. $\Gal(K_2/k)$).  
Now let $F=k((t_1))((t_2))$, $F_1=K_1((t_1))((t_2))$, and $F_2=K_2((t_1))((t_2))$.
Then Platonov proves that \[ A=(F_1/F,\sigma_1,t_1)\otimes (F_2/F,\sigma_2,t_2) \] is a division $F$-algebra
and furthermore $\SK_1(A)\cong \Br(K/k)/(\Br(K_1/k)\Br(K_2/k)) \cong \Zb/n$ for $K=K_1\otimes K_2$
\cite[Thms. 4.7 \& 5.9]{sk1niettriveng}.

\subsubsection{Galois cohomology of $\Qb_p((t_1))((t_2))$} \label{sec:galcoh}

To study the invariants later on, we  encounter the fourth Galois cohomology groups $H^4_{m}(k)$ for 
$k=\Qb_p((t_1))((t_2))$.  These can be calculated
using a splitting for a complete discrete valuation field $K$ with residue field $\kappa(v)$ and with $m\in \kappa(v)^\times$ (hence also $m\in K^\times$)  \cite[7.11]{cohinv}:
\begin{equation}
\label{eq:splitting} 
H^{i+1}_m(K) \cong H^{i+1}_m(\kappa(v)) \oplus H^{i}_m(\kappa(v)). 
\end{equation}
Using the fact that $\cd(\Qb_p)=2$ and $\Br(\Qb_p)=\Qb/\Zb$ \cite[Ch. II, \S 5.1 \& Prop. 15]{serregalcoh}, we find $H^{4}_m(k)\cong \Zb/m$ by applying the splitting to the valuations defined by $t_1$
and $t_2$.  

\subsection{Suslin's invariants} \label{sec:susinvariants}

We  recall the invariants of Suslin and an invariant for biquaternion algebras introduced by Rost.
Let us first give the motivation why these invariants can help to explain Platonov's counterexamples.

\subsubsection{Suslin 1991}   \label{sec:suslin91}
By constructing his invariant $\rho_A \in \Inv^4(\SKb_1(A), \Hcal^\ast_{m,\Acal})$ (for $m=\ind_k(A)\in k^\times$),
Suslin hoped 
to be able to complete the following diagram (for $A$ as in \S \ref{sec:platonov} \ref{sec:nontrivsk1}): 
\begin{equation} \label{diag:motiv}
\xymatrix{ 
\SK_1(A) \ar[rr]^{\cong\qquad \qquad \quad} \ar[d]_{\rho_{A,F}} & & \Br(K/k)/(\Br(K_1/k)\Br(K_2/k)) \ar[d] \\
H^4_{n^2,A}(F) \ar[rr]^{\partial^3_{t_1}\circ \partial^4_{t_2}\qquad \qquad \quad}&  & H^2_{n^2}(k)/\partial^3_{t_1}\circ \partial^4_{t_2}(H^2(k,\mu_{n^2}^{\otimes 2})\cup [A])
}
\end{equation}
The maps $\partial^3_{t_1},\partial^4_{t_3}$ are residues induced by the discrete valuation associated
with $t_1$ and $t_2$, i.e. the projection maps of degree $-1$ in \eqref{eq:splitting}.
At the time he conjectured the existence of such an invariant, he could not yet give a definition.  
He was however able to define an invariant
$\rhosusouda{A} \in \Inv^4(\SKb_1(A), \Hcal^\ast_{m,\Acal^{\otimes 2}})$ which 
he proves to be non-trivial for Platonov's examples of non-trivial $\SK_1$.

\subsubsection{Biquaternion algebras} \label{sec:invbiquat}

In the case of biquaternion algebras, Rost was able to define a related invariant of $\SKb_1(A)$.
Suppose $A$ is a biquaternion algebra over a field $k$ of $\car(k)\neq 2$.  Then \textit{Rost's invariant} 
$\rhorosta{A}$ is an invariant sitting in $\Inv^4(\SKb_1(A),\Hcal_{2}^\ast)$ \cite[Thm. 4]{merkrostinv}.  Moreover,
it fits into an exact sequence:
\[
 0 \to  \SKb_1(A)(k) \to H^4(k,\Zb/2\Zb) \to  H^4(k(Y),\Zb/2\Zb),
\]
where $Y$ an Albert form of $A$.  
This invariant was generalised in \cite[\S 17]{kmrt} to biquaternion algebras in any characteristic
using Witt groups and Witt rings.  We  come back to this generalised invariant in Section \ref{sec:biquat} 
as its definition requires a lot of  terminology related to involutions.

\subsubsection{Suslin 2006} \label{sec:suslin06}

Using Voevodsky's motivic \'etale cohomology, Suslin was able
to define his conjectured invariant in 2006 \cite[\S 3]{suslin}.  
We   denote this invariant by $\rhosusa{A}$.  
It is however not clear whether \eqref{diag:motiv} commutes for this invariant.
It is  clear that this (and also the other invariants)
become trivial after base extension to the function field of $X=\SB(A)$ (it is a splitting field of $A$).  
Suslin hence proves his invariant is essentially the same as 
Rost's invariant $\rhorosta{A}$ for a biquaternion algebra over a field $k$ of $\car(k)\neq 2$.  He does this by proving that
\begin{equation}  \label{eq:modcompb}
 \xymatrix{
 \SKb_1(A)(k) \ar[d]_{=}\ar[r]^{\rhosus\qquad \quad \ \ } & \ker\bigl[H^4_{4,A}(k) \to H^4_{4,A}(k(X))\bigr] \ar[d]^{r_A} \\
 \SKb_1(A)(k) \ar[r]_{\rhorost \qquad \quad}& \ker\bigl[H^4_2(k) \to H^4_2(k(Y))\bigr],
}
\end{equation}
is a commutative diagram, where is $r_A$ is the morphism induced on Galois cohomology by the map 
$\mu_4^{\otimes 3}\to \mu_2:a\mapsto a^2$ and where $X$ and $Y$ are as above.
Hence $\rhosus$ is injective for biquaternion algebras and  
\[
\SKb_1(A)(k) \cong \ker\bigl[H^4_{4,A}(k) \to H^4_{4,A}(k(X))\bigr].
\]

\subsection{Kahn's invariants} \label{sec:defkahn}

Let $k$ be a field and $A$ a central simple algebra with $n=\ind_k(A)\in k^\times$.  We  recall the inspiring results 
on invariants of $\SKb_1(A)$ as obtained by Kahn in \cite{kahnsk12}.

\subsubsection{Cyclicity of invariant group} \label{sec:cyclic}
By calculations with motivic \'etale cohomology, Kahn shows $A^0(\SLb_1(A),\Hcal^4_n)_{\text{mult}}$ is finite cyclic \cite[Def. 11.3]{kahnsk12}.
So by Merkurjev's isomorphism \eqref{eq:isomerk}, $\Inv^4(\SLb_1(A),\Hcal^\ast_{n})$ is finite cyclic.  As the canonical projection $\SLb_1(A)\to \SKb_1(A)$ induces
an injective morphism
\begin{equation} \label{eq:injinv} 
\Inv^4(\SKb_1(A),\Hcal^\ast_n) \to \Inv^4(\SLb_1(A),\Hcal^\ast_n), 
\end{equation}
we also find $\Inv^4(\SKb_1(A),\Hcal^\ast_n)$ to be cyclic.  Using Kahn's calculations (loc. cit.), we can pick a canonical  generator that we  call
\textit{Kahn's invariant} $\rhokahna{A}$ of $\SKb_1(A)$.

\subsubsection{Bounds on invariant group} \label{sec:bound}

Kahn also argues the size of $\Inv^4(\SLb_1(A),\Hcal^{\ast}_{n})$ is bounded by 
$\ind(A)/l$ if $n=\ind_k(A)$ is the power of a prime $l$ (ibid., Lem. 12.1).  
Hence the same holds for $\Inv^4(\SKb_1(A),\Hcal^{\ast}_{n})$ by \eqref{eq:injinv}.
For general $n$, Kahn's bound is retrieved using Brauer's decomposition theorem  \cite[Ch. 4, Prop. 4.5.16]{gilleszam}.
For any integer  $n$   with prime factorisation $p_1^{e_1}\cdot \ldots\cdot
p_r^{e_r}$, we denote
by $\overline{n}$ the integer $p_1^{e_1-1}\cdot \ldots\cdot p_r^{e_r-1}$.

\begin{lemma} \label{lem:boundgen}
Let $k$ be a field and $A$ a central simple algebra of $\ind_k(A)=n\in k^\times$.
Then \[ \bigl|\Inv^4(\SKb_1(A),\Hcal^\ast_n)\bigr|\leq \overline{n}. \]  
\end{lemma}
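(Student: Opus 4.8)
The plan is to reduce the general case to the prime-power case already handled by Kahn, using the Brauer decomposition of $A$. Write the prime factorisation $n = p_1^{e_1}\cdots p_r^{e_r}$, and for each $i$ let $A_i$ be the $p_i$-primary component of $A$, so that $A$ is Brauer-equivalent to $A_1 \otimes_k \cdots \otimes_k A_r$ and $\ind_k(A_i) = p_i^{e_i}$. First I would recall the classical product decomposition $\SKb_1(A) \cong \prod_{i=1}^r \SKb_1(A_i)$ as functors on $\kfields$ (this is standard — it follows from the primary decomposition of $\SK_1$, e.g. from the fact that $\SK_1(A)$ is annihilated by $\ind_k(A)$ together with coprimality of the factors). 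Correspondingly the cycle module $\Hcal^\ast_n$ decomposes, since for each prime-to-$p_i$ part the relevant cohomology with $\mu_n$-coefficients splits as a direct sum over the primary pieces $\mu_{p_i^{e_i}}$; concretely $H^4_n(F) \cong \bigoplus_i H^4_{p_i^{e_i}}(F)$ compatibly with all residues, hence $\Hcal^\ast_n \cong \bigoplus_i \Hcal^\ast_{p_i^{e_i}}$ as cycle modules.

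The next step is to combine these two decompositions at the level of invariant groups. Since $\Inv^4(-, -)$ is additive in the second argument, $\Inv^4(\SKb_1(A), \Hcal^\ast_n) \cong \bigoplus_i \Inv^4(\SKb_1(A), \Hcal^\ast_{p_i^{e_i}})$. Now for a fixed $i$, an invariant with values in the $p_i$-primary module $\Hcal^\ast_{p_i^{e_i}}$ factors through the $p_i$-primary quotient of $\SKb_1(A)$: any invariant is additive in the group variable (being a natural transformation of group-valued functors, or by the multiplicativity built into Merkurjev's description \eqref{eq:isomerk}), so it kills the prime-to-$p_i$ part $\prod_{j\neq i}\SKb_1(A_j)$, which is torsion of order prime to $p_i$ while the target is $p_i$-torsion. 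Therefore $\Inv^4(\SKb_1(A), \Hcal^\ast_{p_i^{e_i}}) \cong \Inv^4(\SKb_1(A_i), \Hcal^\ast_{p_i^{e_i}})$.

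Finally I would invoke Kahn's prime-power bound as recalled in the text: for $A_i$ with $\ind_k(A_i) = p_i^{e_i}$, one has $|\Inv^4(\SKb_1(A_i), \Hcal^\ast_{p_i^{e_i}})| \le \ind_k(A_i)/p_i = p_i^{e_i-1}$, using Kahn's Lemma~12.1 together with the injection \eqref{eq:injinv}. Multiplying over $i$ gives
\[
\bigl|\Inv^4(\SKb_1(A),\Hcal^\ast_n)\bigr| = \prod_{i=1}^r \bigl|\Inv^4(\SKb_1(A_i),\Hcal^\ast_{p_i^{e_i}})\bigr| \le \prod_{i=1}^r p_i^{e_i-1} = \overline{n},
\]
which is the claim. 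The main obstacle I anticipate is making the decomposition $\Hcal^\ast_n \cong \bigoplus_i \Hcal^\ast_{p_i^{e_i}}$ precise as an isomorphism of cycle modules (as opposed to just a field-by-field splitting of graded groups) — in particular checking compatibility with the $K$-theory module structure and the residue maps D1–D4 — and similarly being careful that the product decomposition of $\SKb_1$ is natural in the field. Both are essentially bookkeeping, but they are where the argument has to be written out with care; the rest is formal manipulation plus citing Kahn.
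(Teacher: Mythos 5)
Your proposal is correct and follows essentially the same route as the paper: Brauer decomposition of $A$ into primary components $D_i$, the induced decomposition $\SKb_1(A)\cong\bigoplus_i\SKb_1(D_i)$ with $\SKb_1(D_i)$ of $p_i^{e_i}$-torsion, the primary splitting $H^4_n\cong\bigoplus_i H^4_{p_i^{e_i}}$, and Kahn's prime-power bound via the injection \eqref{eq:injinv}. The paper states this more tersely but the ingredients and their assembly are identical; your extra care in spelling out why an invariant into a $p_i$-primary target kills the prime-to-$p_i$ factors of $\SKb_1(A)$ is just the unpacking of the paper's ``the result follows immediately.''
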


\begin{proof}
Let $p_1^{e_1}\cdot \ldots\cdot p_r^{e_r}$ be a prime decomposition of $n$, then  Brauer's decomposition
theorem (loc.cit.) gives division $k$-algebras $D_1,\ldots,D_r$ of $\ind_k(D_i)=p_i^{e_i}$ such that
$A$ is Brauer-equivalent to $D_1\otimes \ldots \otimes D_r$.  This even gives rise to a decomposition
$\SKb_1(A) \cong \SKb_1(D_1) \oplus \ldots \oplus \SKb_1(D_r)$ (ibid., Ch. 4, Ex. 9).  Recall
also that $\SKb_1(D_i)$ has $p_i^{e_i}$-torsion \cite[\S 23, Lem. 3]{draxl}. 
Then the result follows immediately from the primary result of Kahn and the isomorphism
$ H^4_n(k)\cong H^4_{p_1^{e_1}}(k)\oplus \ldots \oplus H^4_{p_r^{e_r}}(k)$. 
\end{proof}

\begin{rem} \label{rem:kahnbiquatnontriv}
As Kahn mentions,  this bound is sharp  for biquaternion division algebras \cite[\S 12]{kahnsk12}.  This follows from \cite[Prop. 4.9 \& Thm. 5.4]{invalggroup}.  In particular,
$\rhokahn$ is not trivial for biquaternion division algebras.  In \S \ref{sec:compkahnmod} \ref{sec:nontrivkahn},
we generalise this result.
\end{rem}

\subsubsection{Generalisation of Suslin's invariant}

Apart from using Merkurjev's viewpoint to define a new invariant, Kahn also generalises $\rhosus$ to 
 invariants
\[ \rho_r \in \Inv^4(\SKb_1(A) ,\Hcal^{\ast}_{n,\Acal^{\otimes r}}) \]
with $n\in \ind_k(A)\in k^\times$ and $r=1,\ldots,\per_k(A)-1$.  Suslin's invariant $\rhosus$ is retrieved
setting $r=1$.  It is not clear whether $\rhosusoud$ equals $\rho_2$.  
As mentioned in \S \ref{sec:susinvariants} \ref{sec:suslin06}, $\rhosus$ has its image in $\ker\bigl[H^4_{n,A}(F)\to H^4_{n,A}(F(X))\bigr]$ for $F$ a field extension of $k$ and $X$ the Severi-Brauer variety of $A$ \cite[\S 3]{suslin}.   Kahn generalises this to $\rho_r$
replacing $X$ by the generalised Severi-Brauer variety $\SB(r,A)$ (ibid., \S 8.B).	

He also gives a bound on the torsion of these invariants inside $\Inv^4(\SKb_1(A),\Hcal^{\ast}_{n,\Acal})$ if $l=\per_k(A)$ is a prime.
Indeed from (ibid., Thm. 7.1(c) \& Cor. 12.10) it follows that
they have
\begin{itemize}
 \item $l$-torsion if $\ind_k(A)=\per_k(A)=l>2$, 
 \item $l^2$-torsion if $\per_k(A)>\ind_k(A)=l>2$, and
 \item 2-torsion if $\per_k(A)=2$.
\end{itemize}
For   
a central simple $k$-algebra $A$ with $n=\ind_k(A)\in k^\times$ and $\per_k(A)=n/\overline{n}$, there is a similar statement
using a Brauer decomposition.
Take a prime factorisation  $n=p_1^{e_1}\cdot \ldots\cdot p_r^{e_r}$ 
and let  $D_1\otimes \ldots \otimes D_r$ be a Brauer decomposition of $A$ as in the proof of Lemma \ref{lem:boundgen}.
Then put $m=p_1^{f_1}\cdot \ldots \cdot p_r^{f_r}$, where $f_i=1$ if $p_i=2$ or if $\ind_k(D_i)=\per_k(D_i)=p_i>2$,
and $f_i=2$ if $\ind_k(D_i)>\per_k(D_i)=p_i>2$.  Then it is clear that $\rho_r$ has
$m$-torsion.

\subsection{Generalising invariants} \label{sec:generalising}

In \cite{wsuspos}, the author introduced a way of generalising the invariants of $\SKb_1(A)$ to any central simple $k$-algebra $A$
(so also when $\ind_k(A)\not\in k^\times$).  This is done using a lift from 
a field of positive characteristic to a field of zero characteristic where the invariants are always defined.
In this subsection, let $k$ be a field of $\car(k)=p>0$.
We first explain Kato's cohomology of \textit{logarithmic differentials} which are used in (loc. cit.) to generalise  $\Hcal^\ast_{n}$ 
when $p\,|\,n$.  This allows us to perform lifts from positive characteristic to characteristic 0.

\subsubsection{Logarithmic differentials} \label{sec:logdif}
For any integer $l> 0$, the cohomology groups
$H^{q+1}_{p^l}(k)$ are defined as 
\[ (W_l(k)\otimes \underset{q \text{ times}}{\underbrace{k^\times \otimes \ldots \otimes k^\times}})/I,\]
where $W_l(k)$ are the Witt vectors of length $l$ on $k$ and $I$ is the ideal generated by
\begin{enumerate}[(i)] 
 \item $w\otimes b_1 \otimes \ldots \otimes b_{q}$, satisfying $b_i=b_j$ for $1\leq i < j \leq q$, \label{it:J1}
 \item $(0,\ldots,0,a,0,\ldots,0)\otimes a \otimes b_2 \otimes \ldots \otimes b_{q}$, \label{it:J2}
\item $(w^{(p)}-w) \otimes b_1 \otimes \ldots \otimes b_{q}$, \label{it:J3}
\end{enumerate}
with $w\in W_l(k)$, $b_1,\ldots, b_q\in k^\times$, and $w^{(p)}=(a_1^p,\ldots,a_l^p)$ if $w=(a_1,\ldots,a_l)$.

For $l=1$, we can view $H^{q+1}_p(k)$ as the cokernel of
\[ F-1:\Omega_{k}^q \to \Omega_{k}^q/d\Omega_{k}^{q-1},\   \text{ defined by } \ x \frac{dy_1}{y_1} \wedge \ldots \wedge 
 \frac{dy_q}{y_q} \mapsto (x^p-x) \frac{dy_1}{y_1} \wedge \ldots \wedge 
 \frac{dy_q}{y_q} \mod d\Omega_{k}^{q-1},
\]
hence the terminology \textit{``logarithmic differentials''}.  (For $l=0$, set $H^{q+1}_{p^l}(k)=0$.)

In general, for an integer $n=p^lm>0$ ($l,m\geq 0$ integers with $p\nmid m$), we define 
\begin{equation*} \label{eq:defcohgen}
H^{q+1}_n(k)=H^{q+1}_{p^l}(k) \oplus H^{q+1}_m(k).
\end{equation*}
This is a generalisation of Galois cohomology, since this theory fills in some gaps in Galois cohomology.  
It gives for example a description of the $p^l$-th torsion part of the Brauer group, compatible with the prime-to-$p$ part: $\phantom{ }_{p^l}\Br(k)\cong H^2_{p^l}(k)$.  
So for any integer $n>0$ we get $\phantom{ }_n\Br(k)\cong H^2_n(k)$.  We can also define $\Hcal^\ast_n$ in the same
way as in \eqref{eq:hm}.  It is however not a cycle module, but rather
a functor of graded groups.  To obtain a cycle module we have to tweak it a little bit.  For this paper we do not
need a cycle module, so we rather work with this functor of graded groups to ease the discussion (see \cite[\S 4.1 (d)]{wsuspos} for more details -- see also Remark \ref{rem:l} infra).

Using this isomorphism, together with a scalar multiplication by \textit{Milnor's $K$-groups} on $(H^{q+1}_{p^l}(k))_{q\geq 0}$,
we can generalise the definition of $\Hcal^\ast_{n,\Acal}$ for a central simple $k$-algebra $A$ with arbitrary index.
Recall that Milnor $K$-groups $K^M_r(k)$ (for an integer $r\geq 0$) are
defined as
\[ \underset{r \text{ times}}{\underbrace{k^\times \otimes \ldots \otimes k^\times}}/J, \]
where $J$ is the ideal generated by $x_1\otimes \ldots \otimes x_i$ with $x_i+x_j=1$ for some $1\leq i < j \leq r$.
Elements of $K^M_r(k)$ are called \textit{symbols} and the generators $x_1\otimes \ldots \otimes x_r$ are called \textit{pure symbols}, commonly denoted  $\{x_1,\ldots,x_r \}$.  The scalar multiplication of $K^M_r(k)$ on $(H^{q+1}_{p^l}(k))_{q\geq 0}$
is given by
\begin{eqnarray*}
 \cdot : K^M_r(k) \times H^{q+1}_{p^l}(k) & \to & H^{r+q+1}_{p^n}(k), \quad \text{ defined by } \\
(\{x_1,\ldots,x_r\},w\otimes b_1\otimes \ldots \otimes b_q ) & \mapsto & w \otimes x_1 \otimes \ldots \otimes x_r \otimes b_1 \otimes \ldots \otimes b_q.
\end{eqnarray*}
This  allows us to define a relative version.  Before doing so, we recall that also
the cup-product definition of \eqref{eq:cyclemod} can be generalised using $K$-theory.  Indeed, the 
isomorphism $k^\times/(k^\times)^m \cong H^1(k,\mu_m)$ for any $m\in k^\times$ gives  the
\textit{Galois symbol} by taking the cup-product:
\begin{equation} \label{eq:galoissymbol}
 h^r_{m,k}: K^M_r(k) \to H^r(k,\mu_m^{\otimes r}). 
\end{equation}
The Bloch-Kato conjecture (proved by Voevodsky-Rost-Weibel \cite{blochkato,voevodblken,rostblken,weibelblk})
even says it is surjective with kernel $mK^M_r(k)$. 
Hence we get a scalar multiplication of $K^M_r(k)$ on $(H^{q+1}_m(k))_{q\geq 0}$:
\[ \cdot: K^M_r(k) \times H^{q+1}_m(k) \to H^{r+q+1}_m(k) \quad \text{defined by} \quad 
( a ,b) \mapsto h^r_{m,k}(a) \cup b. \]
For arbitrary $n$, this defines in total a $K^M_r(k)$-module structure on $(H^{q}_n(k))_{q>0}$. If $A$ is a central
simple $k$-algebra of $\ind_k(A)=n$, we can then define for any field extension $F$ of $k$ and integers $q\geq 0$ and $r$:
\[ H^{q+1}_{n,A^{\otimes r}}(F)=H^{q+1}_n(F)/(K^M_{q-1}(F)\cdot r[A_F]). \]
By the remarks above, this is clearly a generalisation of the moderate case.  If $r\equiv 0 \mod \per_k(A)$, then clearly 
$H^{q+1}_{n,A^{\otimes r}}(F)= H^{q+1}_{n}(F)$ (cfr. \S \ref{sec:merkurjev} \ref{sec:defcycmod}).  In the same way as in \eqref{eq:hna}, we 
obtain a functor of graded groups $\Hcal^\ast_{n,\Acal^{\otimes r}}$.

\subsubsection{Lifts} \label{sec:lifts}
We  now consider $k$ to be the residue field of a complete discrete valuation ring $R$ with fraction field $K$ of
$\car(K)=0$.   The specialisation map 
$\Br(R)\to \Br(k): [A] \mapsto [A\otimes_R k] $ is bijective \cite[Cor. 6.2]{grothbrauer} and
$\Br(R)\to \Br(K): [A] \mapsto [A\otimes_R K]$ is injective \cite[Thm. 7.2]{ausgold}.  So we have an inclusion
$\Br(k)\to \Br(K)$; given a central simple algebra $A$ over $k$, we get a \textit{lifted Azumaya algebra} $B$ over $R$
and an associated central simple algebra $B_K$ over $K$.  Because of the definition, $\ind_k(A)=\ind_K(B_K)$ 
and by a theorem of Platonov we get $\SKb_1(A)(k)\cong \SKb_1(B_K)(K)$ \cite[Thm. 3.12]{sk1niettriveng} -- see also \cite[Cor. 3.3]{wsuspos}.

Furthermore, there exists an injection $H^{i+1}_{n}(k) \to H^{i+1}_{n}(K)$; on the prime-to-$p$ parts of $H^{i+1}_{n}(k)$ defined by
\eqref{eq:splitting}, for general $n$ see \cite[Proof of Prop. 2]{katogalcoh} and \cite[Prop. 6.8]{izboldhin} (see also
Remark \ref{rem:definj} for 2-primary $n$).  This injection
also continues to the relative cohomology groups; i.e. there exists an injection $H^{i+1}_{n,A^{\otimes r}}(k) \to H^{i+1}_{n, B_K^{\otimes r}}(K)$ for any integer $r$ and $A$ and $B$ as above \cite[Prop. 4.10]{wsuspos}.

This  allows us to define an invariant for any central simple $k$-algebra, using the existence in the
characteristic 0 case.  In order to stay functorial, we have to use \textit{$p$-rings}.  A $p$-ring is a complete discrete valuation ring $R$ with residue field $k$ of $\car(k)=p>0$ and
whose maximal ideal is generated by $p$.  For a reference see e.g. \cite[\S 23]{matsumura} where $p$-rings can also be not complete, we however
always suppose them to be complete. For a $p$-ring $R$, the fraction field $K$ is of characteristic 0.  
Moreover, Cohen proved that given a field $k$ of $\car(k)>0$, there always exists  a $p$-ring with $k$ as residue field \cite{cohen}.  

For sake of convenience, we also use the following terminology.
\begin{defin}
Suppose $\rho$ is an invariant of $\SKb_1$ which is defined for any central simple algebra $A$ with index $n$ not divisible by the
characteristic of its base field and which has 
values in the Galois cohomology group $\Hcal^4_{n,\Acal^{\otimes r}}$ for $r$ a fixed integer.  
Then we say $\rho$ is a \textit{moderate invariant of $\SKb_1$ with values in $\Hcal^4_{\otimes r}$}.
We denote by $\rho_A$ the invariant for a central simple algebra $A$. 
\end{defin}

In \cite[Thm.4.20]{wsuspos}, the author proves the following theorem.

\begin{thm}  \label{thm:lift}
Let $k$ be a field of $\car(k)=p>0$ and $A$ a central simple $k$-algebra of $n=\ind_k(A)$.  Take $R$ a $p$-ring with residue field $k$ and fraction field $K$.  Let $B$ be the lifted Azumaya $R$-algebra of $A$ and let
$\rho\in \Inv^4(\SKb_1(B_K),\Hcal_{n,\Bcal^{\otimes r}_K}^\ast)$ (for $r$ any integer).  There exists a unique invariant $\tilde{\rho}\in \Inv^4(\SKb_1(A),\Hcal_{n,\Acal^{\otimes r}}^\ast)$ 
such that for any field extension $k'$, $p$-ring $R'$ with residue field $k'$, and fraction field $K'$,  we have a commutative
diagram:
\begin{equation} \label{diag:gen}
\xymatrix{ 
\SKb_1(A)(k')  \ar[rr]^{\tilde{\rho}_{k'}} & & H^4_{n,A^{\otimes r}}(k') \ar[d] \\
\ar[u]^{\cong} \SKb_1(B_K)(K') \ar[rr]_{\rho_{K'}} 
 & & H^4_{n,B_K^{\otimes r}} (K').
}
\end{equation}
\end{thm}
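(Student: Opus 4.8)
The plan is to construct $\tilde\rho$ pointwise via the identification $\SKb_1(A)(k')\cong\SKb_1(B_K)(K')$ and the injection $H^4_{n,A^{\otimes r}}(k')\hookrightarrow H^4_{n,B_K^{\otimes r}}(K')$ from Section~\ref{sec:lifts}, and then to check that this construction is natural (i.e., really an invariant) and that the resulting invariant is unique. First I would fix, for each field extension $k'$ of $k$, a $p$-ring $R'$ with residue field $k'$ (it exists by Cohen's theorem) and fraction field $K'$; Platonov's theorem \cite[Thm.~3.12]{sk1niettriveng} gives the left vertical isomorphism in \eqref{diag:gen}, and \cite[Prop.~4.10]{wsuspos} gives the right vertical injection. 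For $x\in\SKb_1(A)(k')$, lift it to $\tilde x\in\SKb_1(B_K)(K')$, apply $\rho_{K'}$, and observe that $\rho_{K'}(\tilde x)$ lies in the image of $H^4_{n,A^{\otimes r}}(k')$: this is the crucial point and follows because the image of $\rho$ is contained in the kernel of restriction to the (generalised) Severi--Brauer variety (as recalled for $\rhosus$ and its generalisations $\rho_r$ in \S\ref{sec:suslin06} and the subsection on the generalisation of Suslin's invariant), and this kernel is exactly what the injection $H^4_{n,A^{\otimes r}}(k')\hookrightarrow H^4_{n,B_K^{\otimes r}}(K')$ hits — more precisely, one uses that both $\SK_1$ and the relevant cohomology localise correctly between $R'$ and $k'$ and $K'$, so that the unramified class descends. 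Define $\tilde\rho_{k'}(x)$ to be the unique preimage.

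Next I would verify that $\tilde\rho_{k'}$ is independent of the auxiliary choice of $p$-ring $R'$. This is where one invokes the functoriality machinery of \cite{wsuspos}: given two $p$-rings $R'_1,R'_2$ with the same residue field $k'$, Cohen's structure theory provides a $p$-ring morphism compatible with the identity on $k'$ (after possibly passing to a common refinement), and both Platonov's isomorphism and Kato's injection are functorial in the $p$-ring, so the two candidate values agree. The same functoriality, applied to an inclusion $k'\hookrightarrow k''$ of extensions of $k$ together with a compatible morphism of chosen $p$-rings, shows that the squares
\[
\xymatrix{
\SKb_1(A)(k') \ar[r]^{\tilde\rho_{k'}} \ar[d] & H^4_{n,A^{\otimes r}}(k') \ar[d] \\
\SKb_1(A)(k'') \ar[r]_{\tilde\rho_{k''}} & H^4_{n,A^{\otimes r}}(k'')
}
\]
commute, using that $\rho$ itself is an invariant (natural transformation) upstairs and that the vertical identifications/injections are natural. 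Hence $\tilde\rho\in\Inv^4(\SKb_1(A),\Hcal^\ast_{n,\Acal^{\otimes r}})$ and \eqref{diag:gen} commutes by construction.

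Finally, uniqueness: if $\tilde\rho'$ is another invariant making \eqref{diag:gen} commute for every $k'$, $R'$, $K'$, then for each $x\in\SKb_1(A)(k')$ both $\tilde\rho_{k'}(x)$ and $\tilde\rho'_{k'}(x)$ have the same image in $H^4_{n,B_K^{\otimes r}}(K')$ (namely $\rho_{K'}(\tilde x)$), and since that map is injective they coincide. I expect the main obstacle to be the crucial descent step in the first paragraph — showing $\rho_{K'}(\tilde x)$ actually lands in the image of the Kato injection rather than merely in $H^4_{n,B_K^{\otimes r}}(K')$; this rests on matching the ``unramified'' characterisation of that image with the splitting behaviour of $\rho$ over Severi--Brauer varieties, and is presumably the content that \cite[Thm.~4.20]{wsuspos} establishes in detail. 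The functoriality and uniqueness parts are then essentially formal once the bookkeeping of $p$-ring choices is set up carefully.
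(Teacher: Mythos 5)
Your high-level skeleton is right: construct $\tilde\rho$ pointwise via the Platonov isomorphism and the Kato injection, then check independence of the auxiliary $p$-ring, naturality, and uniqueness (which does indeed follow formally from injectivity of the Kato map). But the crucial descent step is where the argument breaks, for two separate reasons. First, the property you invoke — that $\rho$ has image in the kernel of restriction to the generalised Severi--Brauer variety $\SB(r,B_{K'})$ — is a known feature of the \emph{specific} invariants $\rhosus$ and Kahn's $\rho_r$, not of an arbitrary $\rho\in\Inv^4(\SKb_1(B_K),\Hcal^\ast_{n,\Bcal_K^{\otimes r}})$, which is what the theorem requires. Second, and more seriously, the claim that ``this kernel is exactly what the injection $H^4_{n,A^{\otimes r}}(k')\hookrightarrow H^4_{n,B_K^{\otimes r}}(K')$ hits'' is false. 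The image of the Kato injection is the \emph{unramified} part of $H^4(K')$, i.e.\ the kernel of the residue $\partial_v$ attached to the discrete valuation on $K'$ (cf.\ the splitting \eqref{eq:splitting}); the kernel of restriction to $K'(\SB(r,B_{K'}))$ is a genuinely different subgroup. A class coming from $H^4(k')$ need not vanish over the function field of the Severi--Brauer variety, and a ramified class over $K'$ can perfectly well restrict to zero there, so neither inclusion between the two subgroups holds. Thus even for $\rhosus$ this argument does not yield the descent.

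What is actually needed is a proof that $\rho_{K'}$, applied to any class represented by an integral point of the smooth $R'$-group scheme $\SLb_1(B_{R'})$ (which, by Platonov's theorem, is every class in $\SKb_1(B_K)(K')$), lands in unramified cohomology, i.e.\ has vanishing residue. That is a specialization statement about invariants of smooth group schemes over complete discrete valuation rings, of the flavour developed in \cite{cohinv} and in the Gersten/residue machinery of $A^0(-,M_j)$; it is the real content of \cite[Thm.~4.20]{wsuspos}, and it is unrelated to Severi--Brauer varieties. Until that step is supplied, the construction of $\tilde\rho_{k'}(x)$ as ``the unique preimage'' is not justified, because $\rho_{K'}(\tilde x)$ has not been shown to lie in the image of the Kato injection.
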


\begin{rem} \label{rem:multmod}
The invariants obtained by this theorem are the \textit{wild generalisations} of their moderate variants (hence the terminology \textit{moderate versus wild}).  If $\rho$ is a moderate invariant of $\SKb_1$, we denote the wild generalisation
by $\tilde{\rho}$.  If $A$ is a central simple $k$-algebra of $\ind_k(A)\in k^\times$ (with $\car(k)=p>0$), it is in general not clear
whether $\rho_A=\tilde{\rho}_A$.  By the uniqueness of the theorem, to prove such an equality it suffices to verify that $\rho$ satisfies a lifting property as in \eqref{diag:gen}.
\end{rem}

\begin{rem}[\textit{for the reader who takes the effort to look at the original paper.}] \label{rem:l}
In the original statement, the author treats just the case $r=1$.
The proof does not depend on $r$, so it can easily be generalised to any $r$.
If $r=0$, we can also use (ibid., Cor. 4.14) straightaway  to prove the
theorem.  Also an extra field extension $L$ of $k$ is used.  This is to be sure $\Hcal^\ast_{n,L}$ of (ibid., Def. 4.3)
is a cycle module with base $R$.  We do not explicitly need this here.  Even more, the statement over here
is not weaker as by functoriality any invariant  has  images in $\Hcal^\ast_{n,L,\Acal^{\otimes r}}$.
\end{rem}

\begin{rem}
 Note that the theorem actually defines an injective morphism
\begin{equation} 
\Inv^4(\SKb_1(B_K),\Hcal_{n,\Bcal_K^{\otimes r}}^\ast) \to \Inv^4(\SKb_1(A),\Hcal_{n,\Acal^{\otimes r}}^\ast).  
\end{equation}
 As the invariants $\rhokahn$ and $\rhosus$ are non-trivial for biquaternion algebras in characteristics different from 2, this induces their wild generalisations 
to be non-trivial for biquaternion algebras in characteristic 2.  
\end{rem}

\section{Biquaternion algebras} \label{sec:biquat}

In \cite[\S 17]{kmrt}, Knus-Merkurjev-Rost-Tignol construct an invariant of the reduced Whitehead group
of biquaternion algebras in any characteristic.  For sake of brevity we call it \textit{KMRT's invariant}.  
If the characteristic of the base field is not equal to 2, it is known that 
this invariant essentially equals Suslin's invariant.  In this section, we prove in the characteristic 2 case 
it is essentially equal to Suslin's generalised invariant.

\subsection{Definition}

We start by giving the concrete definition of KMRT's invariant.  This needs the notion of involutions
on Azumaya algebras and Witt groups.

\subsubsection{Involutions on Azumaya algebras}

In order to define the invariant, a symplectic involution $\sigma$ on the biquaternion algebra is used.
We  recall the definition of a symplectic involution on an Azumaya algebra (so in particular on a 
central simple algebra).  We treat this in this general setting of Azumaya algebras, because we  need this for our purposes later on.  We refer to \cite[Ch. III, \S 8]{Knus} for more details
on involutions on Azumaya algebras.

\begin{defin}
Let $R$ be a ring and $A$ an Azumaya algebra over $R$ with an $R$-linear involution $\sigma$. 
Suppose
$\alpha:A\otimes_R S \overset{\sim}{\to} M_n(S)$ is a faithfully flat splitting of $A$.  Then $\tilde{\sigma}=\alpha (1\otimes \sigma) \alpha^{-1}$
is an involution on $M_n(S)$.  Since $x \mapsto \tilde{\sigma}(x^t)$ is an automorphism of $M_n(S)$, we can 
choose $u\in \GL_n(S)$ such that $\tilde{\sigma}(x)=ux^tu^{-1}$ for all $x\in M_n(S)$.  Because $\tilde{\sigma}^2=1$, we get
$u^t=\epsilon u$ for some $\epsilon\in \mu_{2}(S)$.  Then $\epsilon$ is called the type of $\sigma$ (it is well defined
and independent of the choice of a faithfully flat splitting \cite[Ch. III, 8.1.1.]{Knus}).  An involution of type 1
is called \textit{orthogonal} and an involution of type -1 is called \textit{symplectic}.
\end{defin}

\begin{rem}
If $R$ is an integral domain, then an involution on an Azumaya algebra can only have type $1$ or $-1$.  When $k$ is a field,
a central simple $k$-algebra of odd degree can only have orthogonal involutions, while a central simple algebra of even degree
can have involutions of both types \cite[Cor. 2.8]{kmrt}.
\end{rem}

If $A$ is a central simple algebra over $k$ of degree $2n$ with a symplectic involution $\sigma$, we can refine the definition of
reduced norm, trace and characteristic polynomial.  Indeed, if $a\in \Symd(A,\sigma)=\{ a + \sigma(a) \, | \, a\in A\}$, the reduced characteristic polynomial $\Prd_{A,a/k}(X)$ is a square \cite[Prop. 2.9]{kmrt}.  Take
$\Prp_{\sigma,a/k}(X)$ the unique monic polynomial such that $\Prd_{A,a/k}(X)=(\Prp_{\sigma,a/k}(X))^2$; this is the \textit{Pfaffian characteristic polynomial}.  The \textit{Pfaffian trace $\Trp_{\sigma/k}(a)$} and 
the \textit{Pfaffian norm $\Nrp_{\sigma/k} (a)$} are
defined as coefficients of $\Prp_{\sigma,a/k}(X)$, compatible with the expression of $\Nrd_{A/k}(a)$ and $\Trd_{A/k}(a)$
as coefficients of $\Prd_{A,a/k}(X)$ (see standard notations in \S \ref{sec:intro}):
\[ \Prp_{\sigma,a/k}(X)=X^n-\Trp_{\sigma/k}(a)X^{n-1}+\ldots+(-1)^n\Nrp_{\sigma/k}(a).\]
So $\Nrd_{A/k}(a)=(\Nrp_{\sigma/k} (a))^2$ and $\Trd_{A/k}(a)=2\Trp_{\sigma/k} (a)$.

\subsubsection{Witt groups}
To explain the value group of KMRT's invariant, we need Witt groups and rings.\footnote{Do not mix up the Witt group and Witt ring with $W_n(k)$ consisting
of the Witt vectors on a field $k$ - see \S \ref{sec:generalising} \ref{sec:logdif}.}
The \textit{Witt group} $W_q(k)$ is the group of Witt-equivalence classes of non-singular quadratic spaces over $k$ with addition defined by
the orthogonal sum $\perp$.
The \textit{Witt ring} $W(k)$ is the ring of Witt-equivalence classes  of non-singular symmetric bilinear spaces with addition given by the orthogonal sum
$\perp$ and multiplication by the tensor product $\otimes$.

\begin{rem} 
If $\car(k)\neq 2$, we know that  as groups (with the orthogonal sum) $W_q(k)$ and $W(k)$ are isomorphic; not as rings, since one can not come 
up with a direct definition of multiplication of quadratic forms.  For our purposes we are however interested in the characteristic
2 case, so we have to  make a clear distinction.   For more information on Witt groups and Witt rings in this general case,
we refer to \cite[Ch. I]{baeza} and \cite[Ch. 1]{kahnquadform} (including the discussion on the characteristic
2 case by Laghribi in \cite[App. E]{kahnquadform}).
\end{rem}

We can equip $W_q(k)$ with a $W(k)$-module structure.
If $(V,B)$ is
a non-singular symmetric bilinear space on $k$ and $(V',q)$ is a non-singular quadratic space on $k$, then $(V\otimes V',B\otimes q)$ is a
quadratic space on $k$ with $B\otimes q$ defined by 
\[ (B\otimes q)(v\otimes v')=B(v,v)q(v'), \quad \text{ for } v\in V,v'\in V'.\] 
 Let $I(k)$ be the \textit{fundamental ideal} of $W(k)$ (generated by the non-singular bilinear spaces of even dimension).  For any
integer $n\geq 0$, we set $I^n(k)=(I(k))^n$  (with $I^0(k)=W(k)$) and $I^nW_q(k)=I^n(k)\otimes W_q(k)$.  This clearly defines
a filtration
\[ W_q(k) = I^0W_q(k) \supset I^1W_q(k) \supset I^2W_q(k) \supset \ldots \] 
We denote the graded quotients by $\overline{I^nW_q(k)}=I^nW_q(k)/I^{n+1}W_q(k)$.  
\begin{rem} \label{rem:isequal}
Set $W'_q(k)$ the subgroup of $W_q(k)$ consisting of equivalence classes of even-dimensional non-singular quadratic spaces over $k$ and
$I^nW'_q(k)=I^n(k)\otimes W'_{q}(k)$.  If $\car(k)\neq 2$, we have $I^nW'_q(k)=I^{n+1}(k)$ by the equivalence of symmetric bilinear and quadratic spaces.  Again, in general we are not able to use this fact.
\end{rem}

\subsubsection{Definition}

Suppose $A$ is a biquaternion algebra over $k$ (see \S \ref{sec:platonov} \ref{sec:cyclicalgs}) and
suppose furthermore that $\sigma$ is a symplectic involution on $A$.  Knus-Merkurjev-Rost-Tignol construct an explicit map
\[ \SL_1(A) \to \overline{I^3W'_q(k)}: a \mapsto 
\begin{cases}
0 & \text{ if } \sigma \text{ is hyperbolic,} \\
\Phi_v+I^4W'_q(k) & \text{ if } \sigma \text{ is not hyperbolic,} \\
\end{cases}
\]
with kernel equal to $[A^\times,A^\times]$.  Recall that an involution is called \textit{hyperbolic} if there exists
an idempotent $e\in A$ such that $\sigma(e)=1-e$.  Furthermore, $\Phi_v$ is the quadratic form
\[ A \to k : x \mapsto \Phi_v(x)=\Trp_\sigma(\sigma(x)vx), \]
where $v\in \Symd(A,\sigma)\cap A^\times$ satisfies $v(\Trp_\sigma(v)-v)^{-1}=-\sigma(a)a$.  There always exists a $v$ satisfying this condition 
\cite[Lem. 17.3]{kmrt}.  This definition is well defined and independent of the choice of $v$ and $\sigma$.  Moreover the construction
is functorial so that we get an invariant $\rhokmrta{A}$ of $\SKb_1(A)$.

\subsection{Comparison, moderate case} \label{sec:compmod2}

In this section, we recall why $\rhokmrta{A}$ and $\rhosusa{A}$ are equal if $A$ is a biquaternion
algebra over $k$ with $\car(k)\neq 2$.  This is because both Suslin and Knus-Merkurjev-Rost-Tignol proved
their invariant of $\SKb_1(A)$ equals $\rhorosta{A}$.  We already recalled
the commutative diagram \eqref{eq:modcompb} giving us the equality of  $\rhosusa{A}$ and $\rhorosta{A}$.

To compare $\rhokmrt$ to $\rhorost$, famous isomorphisms are used, most of
them recently proved.  Indeed, there are isomorphisms $\psi_{F}^1:K^M_4(F)/2  \to \overline{I^4(F)}=I^4(F)/I^5(F)$ for any $F$ of $\car(F)\neq 2$ (Milnor's conjecture for quadratic forms \cite[Q. 4.3]{milnor}, proved by Orlov-Vishik-Voevodsky \cite[Thm 4.1]{orlvisvoe}) and  $\psi^2_F:H^4(F,\mu_2) \to K^M_4(F)/2$
(Milnor's conjecture \cite[\S 6]{milnor} or a special case of the Bloch-Kato conjecture \eqref{eq:galoissymbol}).  

So the obvious way of comparing
$\rhokmrt$ and $\rhorost$ is by the composed isomorphism $\psi_F=\psi^1_F\circ \psi^2_F$.  Indeed, Knus-Merkurjev-Rost-Tignol prove that the following diagram commutes \cite[Notes \S 17]{kmrt}:
\begin{equation} \label{eq:modcompa}
\xymatrix{
0 \ar[r] & \SKb_1(A)(F) \ar[d]_{=}\ar[rr]^{\rhorosta{A,F}} & & H^4_2(F) \ar[d]_\cong^\psi \ar[r] & H^4_2(F(Y)) \ar[d]^\cong \\
0 \ar[r] & \SKb_1(A)(F) \ar[rr]^{\ \ \rhokmrta{A,F}} & & \overline{I^4(F)} \ar[r] & \overline{I^4(F(Y))},
}
\end{equation}
for $F$ any field extension of $k$ and $Y$ the Albert form attached to $A$ from \S \ref{sec:susinvariants} \ref{sec:invbiquat}.

So combining \eqref{eq:modcompb} and \eqref{eq:modcompa}, it follows that $\rhosus$ and $\rhokmrt$ are the same for biquaternion
algebras in characteristic different from 2.

\subsection{Comparison, wild case} \label{sec:compsauv}

We first explain how to lift central simple algebras with a symplectic involution.  
 We  do this for general central simple algebras and later on use the result for biquaternion algebras.

\subsubsection{Lifting algebras with involution}

Let $k$ be a field of $\car(k)=p>0$ and $R$ a $p$-ring with residue field $k$ and fraction field $K$.
Take  an Azumaya algebra  $A$ over $R$ of degree $2n$ with symplectic
involution $\sigma$.  Define the $R$-group scheme $\PGSpB(A,\sigma)=\AutB (A,\sigma)$, defined for any $R$-algebra $S$ by 
\[ \AutB (A,\sigma)(S)= \Aut (A_S,\sigma_S) = \{ \varphi \in \Aut_S (A_S) \, | \, \varphi \circ \sigma_S = \sigma_S \circ \varphi \},\]
with $\sigma_S=\sigma \otimes \id$ the canonical extension of $\sigma$ to $A_S$.
All Azumaya algebras of degree $2n$ with symplectic involutions up to isomorphism are classified by 
$H^1_{\text{\'et}}(R,\PGSpB(A,\sigma))$ \cite[29.22]{kmrt}. Since $\PGSpB(A,\sigma)$ is a smooth group scheme
(proof as in the field case \cite[p. 347]{kmrt}), we can use Hensel's lemma \`a la Grothendieck to get an isomorphism \cite[Exp. XXIV, Prop. 8.1]{sga33}:
\[ H^1_{\text{\'et}}(R,\PGSpB(A,\sigma)) \cong H^1(k,\PGSpB(\Abar,\bar{\sigma})),\]
where $\Abar=A \otimes_R k$ is the reduced central simple $k$-algebra and $\bar{\sigma}=\sigma \otimes \id$ is the reduced
involution on $\Abar$, which is also symplectic.
On the other hand, we have an inclusion
\[ H^1_{\text{\'et}}(R,\PGSpB(A,\sigma)) \hookrightarrow H^1(K,\PGSpB(A_K,\sigma_K)).\]
So in total, we have an inclusion 
\[ H^1(k,\PGSpB(\Abar,\bar{\sigma})) \hookrightarrow H^1(K,\PGSpB(A_K,\sigma_K)). \]

\begin{rem}
Note that this lift coincides with lifting central simple algebras as explained in \S \ref{sec:generalising} \ref{sec:lifts}. 
Over there we actually used the same arguments for the smooth $R$-group scheme $\PGLb_{R,\infty}$ in order to prove \[ \Br(k)=H^1(k,\PGLb_{k,\infty}) \hookrightarrow H^1(K,\PGLb_{K,\infty})=\Br(K) .\]
\end{rem}

So starting with a central simple $k$-algebra $A$ with symplectic involution $\sigma$, we find a lifted
Azumaya algebra $B$ over $R$ with symplectic involution $\tau$ and hence a central simple
$K$-algebra $B_K$ with symplectic involution $\tau_K$.  In particular, $\deg_k(A)=\deg_K(B_K)$ and $\per_k(A)=\per_K(B_K)$.
Since biquaternion algebras are exactly the central simple algebras of degree 4 and period 1 or 2, we see that
a biquaternion algebra over $k$ with symplectic involution lifts to a biquaternion algebra  with symplectic involution over $K$.

\subsubsection{Preparing the ingredients}

We  now continue the work of \S \ref{sec:compmod2} in the wild case.  
Throughout this section, let $k$ be a field of characteristic 2, $R$ a $2$-ring with residue field $k$
and fraction field $K$, and  $A$ a biquaternion algebra over $k$ with lifted Azumaya algebra $B$ over
$R$.
As $\rhosusg$ and $\rhokmrt$ have different value groups, we first give some remarks on how they relate and how
we can use the uniqueness statement of Theorem \ref{thm:lift} to compare the invariants.

By a theorem of Kato, we have an isomorphism 
$\psi_k:H^{4}_2(k)\to \overline{I^3W_q(k)}$ \cite{katoquadform}.
Similar to Suslin's construction \eqref{eq:modcompb}, we can also give a morphism
$H^4_{4,A}(k) \to H^4_2(k)$.
Indeed, the \textit{projection} 
\[ \pi^2_1: W_2(k)\to W_1(k): (a_0,a_1) \to (a_0) \]
gives a morphism $r:H^{4}_4(k)\to H^{4}_2(k)$.  Since $\pi^2_1$ sends elements of order 2 to 0, $r$ does exactly the same.  Hence we get a morphism $r_A:H^{4}_{4,A}(k)\to H^{4}_2(k)$ because any element of
$ K^M_{2}(k) \cdot [A]$ is of order 2.
Now we can compare the different groups with a commutative diagram.

\begin{prop} \label{prop:diagcomp}
Let $k'$ be a field extension of $k$ and $R'$ a $2$-ring (containing $R$) with residue field $k'$ and fraction field $K'$,
then the following diagram commutes: 
\begin{equation} \label{eq:diagcomp}
\xymatrix{
H^4_{4,A}(k') \ar[d]^{i^\ast} \ar[r]^{r_A}  & H^4_2(k') \ar[r]_{\cong\ \ }^{\psi_{k'}\ } \ar[d]^{i^\ast} & \overline{I^3W_q(k')} \ar[d]^{j} \\
H^4_{4,B_K}(K') \ar[r]_{\  r_{B}}  & H^4_2(K') \ar[r]^{\cong \ \ }_{\psi_{K'}\ } & \overline{I^3W_q(K')}. 
}
\end{equation}
\end{prop}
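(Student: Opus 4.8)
The plan is to establish commutativity of the diagram \eqref{eq:diagcomp} by splitting it into its two constituent squares and checking each separately, since the outer rectangle commutes as soon as both inner squares do. The right-hand square involves only Kato's isomorphisms $\psi_{k'}$ and $\psi_{K'}$ together with the vertical maps $i^\ast$ (induced by the inclusions $H^4_2(k')\hookrightarrow H^4_2(K')$ coming from \S\ref{sec:generalising}\ref{sec:lifts}) and $j$ (the natural map on the graded Witt quotients induced by extension of quadratic forms along $k'\hookrightarrow K'$). The left-hand square involves the maps $r_A$, $r_B$ built from the projection $\pi^2_1:W_2\to W_1$ of Witt vectors, and again the vertical maps $i^\ast$.

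For the left square, I would unwind the definition of $r_A$ and $r_B$: both are induced by the same Witt-vector projection $\pi^2_1$ applied at the level of the Kato-style cohomology groups $H^4_4\to H^4_2$, and then descended to the relative groups $H^4_{4,A}\to H^4_2$ (resp.\ $H^4_{4,B_K}\to H^4_2$) using that $K^M_2\cdot[A]$ and $K^M_2\cdot[B_K]$ consist of order-$2$ elements, which die under $\pi^2_1$. The vertical maps $i^\ast$ are the injections on cohomology groups furnished by \S\ref{sec:generalising}\ref{sec:lifts} (and their relative versions $H^4_{4,A}(k')\hookrightarrow H^4_{4,B_K}(K')$). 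Since the construction of these injections is functorial in the coefficient data and compatible with the Witt-vector structure used to define $H^4_{p^l}$ in \S\ref{sec:generalising}\ref{sec:logdif}, the projection $\pi^2_1$ commutes with $i^\ast$; one then checks this descends compatibly to the relative quotients, which is immediate since $i^\ast$ sends $K^M_2(k')\cdot[A]$ into $K^M_2(K')\cdot[B_K]$ by construction of the lift.

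For the right square, the content is that Kato's isomorphism $\psi:H^4_2\to\overline{I^3W_q}$ is natural with respect to field extensions: this is part of how the isomorphism is constructed in \cite{katoquadform} (it is defined uniformly by sending symbols to Pfister-form classes), so $j\circ\psi_{k'}=\psi_{K'}\circ i^\ast$ holds essentially by definition of $j$ as the map induced on graded quotients by scalar extension of quadratic spaces. I expect the main obstacle to be purely bookkeeping: making sure that the several ``residue'' and ``projection'' maps ($i^\ast$ on $H^4_4$ versus on $H^4_2$, the relative versions, and the graded Witt map $j$) are all set up so that the naturality statements can be invoked, in particular tracking the $2$-primary part of the injection $H^{i+1}_n(k)\hookrightarrow H^{i+1}_n(K)$ (the point deferred to Remark~\ref{rem:definj} in the excerpt). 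Once the naturality of $\psi$ and the compatibility of $\pi^2_1$ with $i^\ast$ are in hand, both squares commute and pasting them yields the claim.
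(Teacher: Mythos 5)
Your decomposition into two squares agrees with the paper, but the arguments you sketch for each square do not work as stated.

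For the left square, you claim that $r_A$ and $r_B$ are ``both induced by the same Witt-vector projection $\pi^2_1$ applied at the level of the Kato-style cohomology groups.'' This is wrong on the $K'$-side: $K'$ has characteristic $0$, so $H^4_4(K')$ is ordinary Galois cohomology $H^4(K',\mu_4^{\otimes 3})$ and $r_B$ is, as the paper recalls at \eqref{eq:modcompb}, the map induced on coefficients by $\mu_4^{\otimes 3}\to\mu_2$, $a\mapsto a^2$. The Witt-vector projection $\pi^2_1$ only makes sense on the characteristic-$2$ side. The commutativity $i^\ast\circ r_A=r_B\circ i^\ast$ is therefore a genuine compatibility between two different-looking constructions, and the paper proves it by an explicit element computation using the precise definition of $i^\ast$ (lifting $(a_0,a_1)$ via an Artin--Schreier--Witt equation and comparing the resulting $1$-cocycles), not by an appeal to a common source construction.

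The right square is where your proposal really breaks down. You say the content is that ``Kato's isomorphism $\psi$ is natural with respect to field extensions,'' with $j$ being ``scalar extension along $k'\hookrightarrow K'$.'' But there is no field inclusion $k'\hookrightarrow K'$: the field $k'$ is the residue field of the $2$-ring $R'$ and $K'$ is its fraction field. The map $j$ goes through the Witt group of the valuation ring, $W_q(k')\cong W_q(R')\hookrightarrow W_q(K')$ (Remark~\ref{rem:definj}), and $i^\ast$ is the wild lifting map, not a restriction. Moreover $\psi_{k'}$ and $\psi_{K'}$ are not built ``uniformly by the same recipe'': $\psi_{k'}$ is Kato's characteristic-$2$ isomorphism between logarithmic-differential cohomology and $\overline{I^3W_q}$, while $\psi_{K'}$ in characteristic $0$ is the composition of the Galois symbol with the Orlov--Vishik--Voevodsky isomorphism. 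Comparing these two across the lift is exactly the nontrivial input, supplied by Kato's explicit comparison lemma \cite[Lem.~11]{katoquadform}, after which the paper still has to verify the identity $i(\bar b)=h^1_{2,k'}(4b+1)$ by an Artin--Schreier versus Kummer computation. Your proposal would not reach that point, because the ``naturality'' you invoke is not available in this setting.
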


\begin{rem} \label{rem:definj}
The morphisms $r_{B}=r_{B_{K'}}$ and $\psi_{K'}$ are as in \eqref{eq:modcompb} and \eqref{eq:modcompa}, while $r_A=r_{A_{k'}}$ and $\psi_{k'}$ are as above.  
The morphism $j$ on Witt groups is as in \cite[Ch. V, Cor. 1.5]{baeza}; it is the composition of a bijection
of $W_q(R')\cong W_q(k')$ induced by the residual morphism $R'\to k'$ and an injection $W_q(R')\to W_q(K')$.  Here $W_q(R')$ is
the Witt group of quadratic spaces of \textit{constant rank} over $R'$. See \cite[Ch. I and V]{baeza} for more information.

The maps $i^\ast$ are defined by Kato as in \S \ref{sec:generalising} \ref{sec:lifts}.  We recall the exact definition of this morphism which we  need
in the proof: for any integer $n>0$
\[ i^\ast: H^{q+1}_{2^n}(k') \to H^{q+1}_{2^n}(K') \text{ is defined by } w \otimes \bar{b}_1 \otimes \ldots \otimes \bar{b}_q \mapsto i(w) \cup h^q_{2^n,K'}(\{b_1,\ldots ,b_q\}), \]
where $b_1,\ldots,b_q\in R'$, the morphism $h^q_{2^n,K'}$ is the Galois symbol \eqref{eq:galoissymbol} and $i(w)$ is the composition
\[ W_n(k')/\{ w^{(p)}-w | w \in W_n(k')\} \cong H^1(k',\mathbb{Z}/2^n\mathbb{Z}) \overset{\iota}{\hookrightarrow} H^1(K',\mathbb{Z}/2^n\mathbb{Z}),\]
where the isomorphism is induced by the additive form of Hilbert 90 for $W_n(k_s')$ applied to Witt's short exact sequence 
\cite[\S 5]{witt}:
\[ \xymatrix{ 0 \ar[r] & \mathbb{Z}/2^n\mathbb{Z} \ar[r] & W_n(k'_s) \ar[r]^{x^{(2)}-x} & W_n(k'_s) \ar[r] & 0. }\]
The injection $\iota$ is defined in a similar way as one can get an injection from the splitting \eqref{eq:splitting}.
It can be proved that $i^\ast$ behaves well by going to the relative cohomology groups $H^4_{4,A}(k')$ and $H^4_{4,B_K}(K')$ \cite[Prop. 4.10]{wsuspos}.
\end{rem}

\begin{proof}
Let $R'_{\text{nr}}$ be a $2$-ring with residue field $k'_s$ and fraction field $K'_{\text{nr}}$.  So $R'_{\text{nr}}$ is the integral closure of $R'$ in $K'_{\text{nr}}$.

We  first prove $i^\ast \circ r_A=r_B \circ i^\ast$.  This follows merely by the definition of $i^\ast$.
Let $(a_0,a_1)\otimes x_1 \otimes x_2 \otimes x_3 \in H^4_{4,A}(k')$ and take 
$(b_0,b_1)\in W_2(k'_s)$ such that
$(b_0^2,b_1^2)-(b_0,b_1)=(a_0,a_1)$.
Then $(a_0)=(b_0)^2-(b_0)\in W_1(k')$ and
\[ i^\ast \circ r_{A} ((a_0,a_1)\otimes x_1 \otimes x_2 \otimes x_3)=
(\bar{\sigma}(b_0)-b_0)_{\sigma \in \Gamma_{K'}}
\cup h^3_{2}(\{x_1,x_2,x_3\}),\]
where we consider $\bar{\sigma}(b_0)-b_0$ as an element of $\Zb/2\Zb$ for any $\sigma\in \Gamma_{K'}$ (with residue
$\bar{\sigma}\in \Gamma_{k'}$).
On the other hand, 
\begin{eqnarray*} 
r_{B} \circ i^\ast((a_0,a_1)\otimes x_1 \otimes x_2 \otimes x_3)&=&
r_B\left[(\bar{\sigma}(b_0,b_1)-(b_0,b_1) )_{\sigma \in \Gamma_{K'}} \cup h^3_{4}(\{x_1,x_2,x_3\})\right] \notag \\ 
& =& (\bar{\sigma}(b_0)-(b_0) )_{\sigma \in \Gamma_{K'}} \cup h^3_{2}(\{x_1,x_2,x_3\}).
\end{eqnarray*}

The commutativity of the right square is essentially due to Kato \cite[Lem. 11]{katoquadform};  he proves
the existence of a commutative diagram
\[ 
\xymatrix{ 
H^n_2(k')\ar[r]^\cong \ar[d]^\varphi & \overline{I^3W_q(k')} \ar[d]^j \\
K_n^M(K')/2K_n^M(K') \ar[r]_{\qquad \psi^1_{K'}}^{\qquad \cong} & \overline{I^3W_q(K')} 
}
\]
where $\psi^1_{K'}$ is the isomorphism of Milnor's conjecture on quadratic forms (see \S \ref{sec:compmod2}) and $\varphi$ is defined by
\[ \bar{b} \frac{d\bar{a}_1}{\bar{a}_1}\wedge \frac{d\bar{a}_2}{\bar{a}_2}\wedge  \frac{d\bar{a}_3}{\bar{a}_3} \mod I \mapsto
 \{ 1+4b,a_1,a_2,a_3\} \mod 2K_n^M(K'),
\]
for $a_1,a_2,a_3,b\in R'$.
Since the isomorphism $\psi_{K'}:H^4_2(K')\to\overline{I^3W_q(K')}$ is defined as composition of $\psi^1_{K'}$ with the Galois symbol
$h^4_{2,K'}$, it suffices to check $i(\bar{b})=h^1_{2,k'}(4b+1)$ for any $b\in R'$.
So take $c\in k_s'$ such that $c^2-c=\bar{b}$.  Then 
\[i(\bar{b})=(\bar{\sigma}(c)-c)_{\sigma \in \Gamma_{K'}}\in H^1(K',\Zb/2)\] under the standard identification
of $\Zb/2$ and $\mu_{2}(K')$.  
Take $\tilde{c}$ to be a lift of $c$ in $R_{\text{nr}}$.  By eventually changing  the representant of $\bar{b}$ in $R'$, we can assume
$\tilde{c}^2-\tilde{c}=b$.  Then $4b+1=(2\tilde{c}+1)^2$ and 
\[ h^1_{2,K'}(4b+1)=(\sigma(2\tilde{c}+1)/(2\tilde{c}+1))_{\sigma \in \Gamma_{K'}} \in H^1_2(K').\]
So if $\sigma(2\tilde{c}+1)/(2\tilde{c}+1)=1$, we have $\sigma(\tilde{c})=\tilde{c}$.  On the other hand, if
$\sigma(2\tilde{c}+1)/(2\tilde{c}+1)=-1$, we get $\sigma(\tilde{c})=-\tilde{c}-1$.  This gives indeed the desired
equality.
\end{proof}

\subsubsection{Cooking up the result}

Using Theorem \ref{thm:lift} and Proposition \ref{prop:diagcomp}, we can prove the main theorem.

\begin{thm} \label{thm:kmrtlift}
Let $k$ be a field of characteristic 2 and $A$ a biquaternion algebra over $k$, then for any
field extension $k'$ of $k$
\[ \rhokmrta{A,k'} = \psi_{k'} \circ r_A \circ \rhosusga{A,k'} \]
with $\psi_{k'}$ and $r_A$ as in \eqref{eq:diagcomp}.
\end{thm}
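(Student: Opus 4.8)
The plan is to deduce this from the uniqueness part of Theorem~\ref{thm:lift} together with the characteristic-zero comparison recalled in \S\ref{sec:compmod2} and the compatibility diagram of Proposition~\ref{prop:diagcomp}. Fix a field extension $k'$ of $k$ and a $2$-ring $R'$ with residue field $k'$ and fraction field $K'$; let $B$ be the lifted Azumaya algebra over $R'$ (with a lifted symplectic involution $\tau$), so $B_{K'}$ is a biquaternion $K'$-algebra of characteristic $0$ with symplectic involution $\tau_{K'}$. On the one hand, by Theorem~\ref{thm:lift} applied to $\rho=\rhosusa{B_{K'}}$, the wild invariant $\rhosusga{A,k'}$ is characterised by the commutativity of \eqref{diag:gen}, i.e.\ the composite $\SKb_1(B_{K'})(K')\xrightarrow{\cong}\SKb_1(A)(k')\xrightarrow{\rhosusga{A,k'}}H^4_{4,A}(k')\xrightarrow{i^\ast}H^4_{4,B_{K'}}(K')$ equals $\rhosusa{B_{K'},K'}$.

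Next I would package the target map. Consider the composite $\Theta_{k'}:=\psi_{k'}\circ r_A:H^4_{4,A}(k')\to\overline{I^3W_q(k')}$ and the analogous characteristic-zero composite $\Theta_{K'}:=\psi_{K'}\circ r_{B}:H^4_{4,B_{K'}}(K')\to\overline{I^3W_q(K')}$. Proposition~\ref{prop:diagcomp} says precisely that $j\circ\Theta_{k'}=\Theta_{K'}\circ i^\ast$. Meanwhile, in characteristic $0$ the moderate comparison of \S\ref{sec:compmod2} (combining \eqref{eq:modcompb} and \eqref{eq:modcompa}, and using that Kato's isomorphism $\psi_{K'}$ in characteristic $0$ agrees with the one appearing there) gives $\rhokmrta{B_{K'},K'}=\Theta_{K'}\circ\rhosusa{B_{K'},K'}$. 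Also, by the functorial construction of KMRT's invariant via lifting algebras with involution (the "lifting algebras with involution" paragraph), together with the injectivity $j$ of the Witt-group comparison and the compatibility of $\rhokmrt$ with residues, we get that $\rhokmrta{A,k'}$ fits into a commutative square with $\rhokmrta{B_{K'},K'}$: namely $j\circ\rhokmrta{A,k'}=\rhokmrta{B_{K'},K'}\circ(\text{the canonical iso }\SKb_1(B_{K'})(K')\cong\SKb_1(A)(k'))$.

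Putting these together: for $x\in\SKb_1(A)(k')$, lift to $\tilde x\in\SKb_1(B_{K'})(K')$; then
\[
j\bigl(\rhokmrta{A,k'}(x)\bigr)=\rhokmrta{B_{K'},K'}(\tilde x)=\Theta_{K'}\bigl(\rhosusa{B_{K'},K'}(\tilde x)\bigr)=\Theta_{K'}\bigl(i^\ast(\rhosusga{A,k'}(x))\bigr)=j\bigl(\Theta_{k'}(\rhosusga{A,k'}(x))\bigr),
\]
where the third equality is Theorem~\ref{thm:lift} and the last is Proposition~\ref{prop:diagcomp}. Since $j$ is injective (Remark~\ref{rem:definj}), we conclude $\rhokmrta{A,k'}(x)=\psi_{k'}\circ r_A\circ\rhosusga{A,k'}(x)$, as desired.

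I expect the main obstacle to be the left-hand vertical compatibility for $\rhokmrt$ itself, i.e.\ establishing $j\circ\rhokmrta{A,k'}=\rhokmrta{B_{K'},K'}\circ(\cong)$. Unlike the other invariants, KMRT's invariant is not a priori obtained through a lifting procedure, so one must check by hand that its explicit definition (the quadratic form $\Phi_v(x)=\Trp_\sigma(\sigma(x)vx)$ with $v$ chosen so that $v(\Trp_\sigma(v)-v)^{-1}=-\sigma(a)a$) is compatible with reduction $R'\to k'$ of the lifted data $(B,\tau,v)$: that the Pfaffian trace and the defining relation for $v$ reduce correctly, and that the class of $\Phi_v$ in $\overline{I^3W_q'}$ maps under $j$ to the corresponding class over $K'$. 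This is where the hypothesis that the lift of a biquaternion algebra with symplectic involution is again a biquaternion algebra with symplectic involution (degree $4$, period $1$ or $2$ being preserved) is used. The remaining steps are bookkeeping with the diagrams already in the paper.
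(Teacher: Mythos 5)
Your proposal follows essentially the same route as the paper: reduce the claim via the injectivity of $j$, the defining property of $\rhosusga{A}$ from Theorem~\ref{thm:lift}, the characteristic-zero identity $\rhokmrta{B_{K'}}=\psi_{K'}\circ r_B\circ\rhosusa{B_{K'}}$, and Proposition~\ref{prop:diagcomp}, leaving as the crux the compatibility $j\circ\rhokmrta{A,k'}=\rhokmrta{B_{K'},K'}\circ(\cong)$, which is exactly what the paper then verifies from the explicit definition (reduction of Pfaffian data and of the element $v$). The one technical point you flag but do not resolve is precisely the one the paper handles by hand: one must normalize the element $y\in\Symd(B_{K'},\tau_{K'})\cap B_{K'}^\times$ so that it has non-negative valuation (replacing $y$ by $\mu y$) before reducing mod the maximal ideal to get $\bar y$ satisfying the corresponding relation for $a$.
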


\begin{proof}
Let $k'$ be a field extension of $k$  and $R$ (resp. $R'$) a $2$-ring with residue field $k$ (resp. $k'$)
and fraction field $K$ (resp. $K'$).
Suppose $\sigma$ is a symplectic involution on $A$
and take $B$ a lifted Azumaya $R$-algebra with lifted symplectic involution $\tau$.
Use the same notations as in \eqref{eq:diagcomp}.
We know $j$ is injective,  
$i^\ast \circ \rhosusga{A}=\rhosusa{B_K}$ (by definition of $\rhosusga{A}$) and $\rhokmrta{B_K}=\psi_{K'} \circ r_B \circ \rhosusa{B_K}$.
So it suffices to prove that $\rhokmrta{B_K}=j\circ \rhokmrta{A}$, which  merely follows from the definition.

Let us first explain the isomorphism $\SKb_1(B_K)(K')\cong \SKb_1(A)(k')$.
We can suppose that $\SKb_1(A)(k')\neq 0$ so that $A_{k'}$ and $B_{K'}$ are division 
algebras by Wang's theorem \cite{wang}.  Then $B_{K'}$ is equipped with a valuation $w$ that extends the valuation $v'$ of $K'$, namely $\frac{1}{4}v'\circ \Nrd_{B_{K'}/K'}$.  The associated valuation ring is $B_{R'}$ and the reduced $k$-algebra is $A_{k'}$.  Even more, $\SL_1(B_{K'})$ is part of $B_{R'}$ and
the isomorphism $\SKb_1(B_K)(K')\cong \SKb_1(A)(k')$ is induced by the residue map on $\SLb_1(B_{K'})$ \cite[Cor. 3.13]{sk1niettriveng} -- see also \cite[Cor. 3.3]{wsuspos}.  

The involutions $\sigma$ and $\tau$ can not be hyperbolic due to \cite[Prop. 6.7 (3)]{kmrt}.  Take
$a\in \SKb_1(A)(k')$ with lift $b \in \SKb_1(B_K)(K')$.  Then by definition it follows that $\Prd_{A_{k'},a/k'}(X)=\overline{\Prd_{B_{K'},b/K'}(X)}$, where the residue is the canonical residue on $R'[X]$.  So we also get
$\Prp_{\sigma_{k'},a/k'}(X)=\overline{\Prp_{\tau_{K'},b/K'}(X)}$ and $\Trp_{\sigma_{k'}/k'}(a)=\overline{\Trp_{\tau_{K'}/K'}(b)}$.
Then take $y\in \Symd(B_{K'},\tau_{K'})\cap B_{K'}^\times$ satisfying $y(\Trp_{\tau_{K'}/K'}(y)-y)^{-1}=-\tau(b)b$.
We can assume $w(y)\geq 0$, since if $w(y)<0$, i.e. $\Nrd_{B_{K'}/K'}(y)=\lambda/\mu \in K'$ with $\lambda,\mu\in R'$, then 
$w(\mu y)=v(\lambda)\geq 0$ and 
\[\mu y\left(\Trp_{\tau_{K'}/K'}(\mu y)-\mu y\right)^{-1} = y(\Trp_{\tau_{K'}/K'}(y)-y)^{-1}. \]
Then we get $\bar{y}(\Trp_{\sigma_K'/K'}(\bar{y})-\bar{y})^{-1}=-\sigma(a)a$
as $b$ is a lift of $a$ and moreover $\bar{y}\in \Symd(A,\sigma)$.
Hence 
\begin{eqnarray*}
\rhokmrta{A,k'}(a)&=&\Phi_{\bar{y}}:A_{k'}\to k': x \mapsto \Trp_{\sigma_{k'}/k'}(\sigma_{k'}(x)\bar{y}x)\quad \text{ and}\\
\rhokmrta{B_{K'},K'}(b)&=&\Phi_{y}:B_{K'}\to K': x \mapsto \Trp_{\tau_{K'}/K'}(\tau_{K'}(x)yx).
\end{eqnarray*}
As $\overline{\Trp_{\tau_{K'}/K'}(\tau_{K'}(x)yx)}=\Trp_{\sigma_{k'}/k'}(\sigma_{k'}(\bar{x})\bar{y}\bar{x})$ for $x\in B_{R'}$,
the required compatibility holds.
\end{proof}

\subsection{Non-triviality of the invariant} \label{sec:nontrivbiquat}

Because the invariants for biquaternions in odd or zero characteristic are injective, they are also injective in
characteristic 2 due to the lifting property (Theorem \ref{thm:lift}).  As $\SKb_1$ is not trivial for Platonov's examples (\S \ref{sec:platonov} \ref{sec:nontrivsk1}) and in general for biquaternion algebras of index 4 \cite{mersuslinbiquat}, we find non-trivial invariants
in characteristic 2.

Another argument for non-triviality of $\rhokmrt$ in characteristic different from 2 is given by a formula of Merkurjev 
for the value on the centre of the biquaternion algebra \cite[Ex. p. 70]{merkrostinv} -- see also \cite[Ex. 17.23]{kmrt}.
Using this formula and the lift from characteristic 2 to characteristic 0, one
could hope to prove the non-triviality of $\rhokmrt$ (and hence of $\rhosus$) in the case when $\car(k)=2$, but this fails.  
Let us comment on this fact.

Say $k$ is a field of characteristic 2, $R$ a $p$-ring with residue field $k$ and fraction field $K$, and let $A=[\bar{a},\bar{b})\otimes_k [\bar{c},\bar{d})$ be a biquaternion $k$-algebra for $a,c\in R$
and $b,d\in R^\times$.  Then the lifted Azumaya $R$-algebra is $B=[a,b)\otimes_R[c,d)$ where e.g. $[a,b)$ is the $R$-algebra
generated by $u,v$ satisfying slightly different relations than usual: $u^2+u=a$, $v^2=b$, and $uv=-v(u+1)$.  We can rewrite it as $B=(4a+1,b)_R\otimes_R (4c+1,d)_R$, where
$(4a+1,b)_R$ is the $R$-algebra generated by $i,j$ with $i^2=4a+1$, $j^2=b$, and $ij=-ji$. Indeed, an isomorphism is given by 
$i=2u+1$ and $j=v$.
Suppose $K$ contains a primitive fourth root of unity $\zeta$, then by (loc. cit.) we have
\[ \rhokmrta{B_K,K}([\zeta])=\pform{ 4a+1,b,4c+1,d} + I^4W'_q(K),\]
where  $[\zeta]$ is the class of $\zeta$ in $\SKb_1(B_K)(K)$ and where $\pform{4a+1,b,4c+1,d}$ is an \textit{$n$-fold Pfister quadratic $K$-form} \cite[Lem. 2.1.1]{kahnquadform}. 

Let $\pi$ be the isomorphism $\SKb_1(B_K)(K)\cong \SKb_1(A)(k)$, then  $\pi([\zeta])=[1]$ because $k$ contains no non-trivial
fourth roots of unity.  By the proof of Theorem \ref{thm:kmrtlift}, we have $j\circ \rhokmrta{B_K,K}([\zeta]) = \rhokmrta{A,k}\circ \pi ([\zeta]) = 0 \in \overline{I^3W'_q(k)}$.  Because the map
$j$ from Proposition \ref{prop:diagcomp} is injective, we get that 
$\pform{4a+1,b,4c+1,d}=0\in \overline{I^3W'_q(K)}$.  We can also verify this by calculating with Pfister forms.
Define $\Qcal=(4a+1,b)_R$ and let $\Xcal$ be the natural affine $R$-scheme with 
\[ \Xcal(R)=\{ x\in \Qcal \, | \, \Nrd_{\Qcal_K/K}(x)=4c+1 \},
\]
where $\Qcal_K=\Qcal \otimes_R K$.  Then $\Xcal$ is an $R$-torsor under $\SLb_1(\Qcal)$, where $\SLb_1(\Qcal)$ is the natural affine $R$-scheme so that $\SLb_1(\Qcal)(R)=\SLb_1(\Qcal_K)(K)\cap \Qcal$.
The special fibre $\Xcal_k=\Xcal \times_R k$ clearly has a rational point, so its class $[\Xcal_k]\in H^1(k,\SLb_1(\Qcal_k))$ is
trivial.  By Hensel's lemma \cite[Exp. XXIV, Prop. 8.1]{sga33}, we get $[\Xcal]=0\in H^1_{\text{\'et}}(R,\SLb_1(\Qcal))$.  Hence $\Xcal$ (as well as the generic fibre $\Xcal_K$) has a rational point, but then by theory of Pfister forms we get $\pform{4a+1,b,4c+1}=0\in W_q'(K)$ \cite[Cor. 2.1.10]{kahnquadform}.  Indeed, $\Nrd_{\Qcal_K/K}(x)$ corresponds with a value of $\pform{4a+1,b}$.  So a fortiori
$\pform{4a+1,b,4c+1,d}=0 \in \overline{I^3W'_q(k)}$.

\section{Comparing to Kahn's invariant}  \label{sec:compkahn}

We compare now all defined invariants of $\SKb_1(A)$ to $\rhokahna{A}$ in the moderate case,
i.e. as they are originally defined.  The results can be generalised to the wild invariants, but with some loss of information.  
We also generalise the formula of Merkurjev for the value on the  centre of $\SKb_1(A)$ (\S \ref{sec:nontrivbiquat}).

\subsection{Moderate case} \label{sec:compkahnmod}

We explain two natural ways of comparing $\Inv^4(\SKb_1(A),\Hcal^{\ast}_{n})$ and $\Inv^4(\SKb_1(A),\Hcal^{\ast}_{n,\Acal^{\otimes r}})$. Let $A$ be a central simple $k$-algebra
with $\ind_k(A)=n\in k^\times$ and $m=\per_k(A)$.

\subsubsection{Ways of looking} \label{sec:ways}
For any field extension $F$ of $k$ and any integer $r$,
we can look at the composition 
\[ m_r: H^4_{n,A^{\otimes r}}(F) \overset{\cdot m} \to H^4_{n/m}(F) \hookrightarrow H^4_{n}(F) \]
and at the projection 
\[ \pi_r: H^4_n(F) \to H^4_{n,A^{\otimes r}}(F).\]
These induce respectively maps
\begin{eqnarray*} \tilde{m}_r:\Inv^4(\SKb_1(A),\Hcal^\ast_{n,\Acal^{\otimes r}}) &\to &\Inv^4(\SKb_1(A),\Hcal^\ast_{n}) \quad \text{ and } \\
 \tilde{\pi}_r:\Inv^4(\SKb_1(A),\Hcal^\ast_{n})& \to &\Inv^4(\SKb_1(A),\Hcal^\ast_{n,\Acal^{\otimes r}}).
\end{eqnarray*}
The maps $\tilde{\pi}_r$ where introduced by Kahn \cite[Rem. 11.6]{kahnsk12}, but we rather consider the maps $\tilde{m}_r$ to compare because
of the special definition of Kahn's invariant as generator of the target group.  We could also refine $\tilde{m}_r$ if $H^2(k,\mu_n^{\otimes 2}) \cup r[A]$ has $m'$-torsion for an integer $0\leq m'<m$.  A good comprehension of both maps actually relies, as Kahn mentions, on a good
comprehension of the cup product with the class of $A$ (loc. cit.).

By the cyclicity of $\Inv^4(\SKb_1(A),\Hcal^\ast_{n})$ (\S \ref{sec:defkahn} \ref{sec:cyclic}), we certainly find 
the following relations.

\begin{prop} \label{prop:mult}
Let $A$ be a central simple $k$-algebra of $\ind_k(A)\in k^\times$.  Then for any integer $r$
and any $\rho \in \Inv^4(\SKb_1(A),\Hcal^\ast_{n,\Acal^{\otimes r}})$ there exists an integer 
$d_A\in \Zb/\overline{n}$ such that 
\[ \tilde{m}_r(\rho)=d_A\, \rhokahna{A} \in \Inv^4(\SKb_1(A),\Hcal_n^\ast)\subset \Zb/\overline{n}.\]
\end{prop}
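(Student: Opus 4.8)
The statement is essentially a formal consequence of the cyclicity of $\Inv^4(\SKb_1(A),\Hcal^\ast_n)$ established in \S \ref{sec:defkahn} \ref{sec:cyclic}, together with Lemma \ref{lem:boundgen}. First I would observe that $\tilde{m}_r$ is a group homomorphism $\Inv^4(\SKb_1(A),\Hcal^\ast_{n,\Acal^{\otimes r}}) \to \Inv^4(\SKb_1(A),\Hcal^\ast_n)$, since it is induced by the map $m_r$ on cycle modules (or functors of graded groups), which is itself a composition of the multiplication-by-$m$ map $H^4_{n,A^{\otimes r}}(F) \to H^4_{n/m}(F)$ and the natural inclusion $H^4_{n/m}(F)\hookrightarrow H^4_n(F)$; both are natural in $F$, hence post-composition with them sends invariants to invariants and respects addition. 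Therefore $\tilde{m}_r(\rho)$ lands in $\Inv^4(\SKb_1(A),\Hcal^\ast_n)$.

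Next, by the discussion in \S \ref{sec:defkahn} \ref{sec:cyclic} the group $\Inv^4(\SKb_1(A),\Hcal^\ast_n)$ is finite cyclic, and $\rhokahna{A}$ is by definition a (canonical) generator of it; by Lemma \ref{lem:boundgen} its order divides $\overline{n}$, so we may identify $\Inv^4(\SKb_1(A),\Hcal^\ast_n)$ with a subgroup of $\Zb/\overline{n}$ via $\rhokahna{A}\mapsto 1$ (this is exactly the identification used implicitly in the statement). Since $\tilde{m}_r(\rho)$ is an element of this cyclic group, there is an integer $d_A$, well defined modulo $\overline{n}$ (indeed modulo the order of $\rhokahna{A}$, which divides $\overline{n}$), such that $\tilde{m}_r(\rho) = d_A\,\rhokahna{A}$. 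This is the claimed equality.

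There is really no hard analytic content here; the only point requiring a little care is the bookkeeping with the value groups — checking that $m_r$ is genuinely well defined on the level of the functors $\Hcal^\ast_{n,\Acal^{\otimes r}}$ and $\Hcal^\ast_n$ (i.e. that multiplication by $m=\per_k(A)$ kills the relation $K^M_{2}(F)\cdot r[A_F]$, so that it factors through the quotient $H^4_{n,A^{\otimes r}}(F)$, and lands in the $(n/m)$-torsion subgroup). This is immediate from $m\cdot r[A_F]=0$ in $\phantom{ }_n\Br(F)$. The rest is the formal structure of cyclic groups, so I expect the main (minor) obstacle to be only phrasing the identification $\Inv^4(\SKb_1(A),\Hcal^\ast_n)\subset \Zb/\overline{n}$ cleanly and noting that $d_A$ is then only determined modulo $\overline{n}$, consistently with the statement.
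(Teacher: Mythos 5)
Your proposal is correct and matches the paper's (very terse) proof, which simply cites the definition of $\rhokahna{A}$ as a generator of the cyclic group $\Inv^4(\SKb_1(A),\Hcal^\ast_n)$ together with the bound $|\Inv^4(\SKb_1(A),\Hcal^\ast_n)|\leq\overline{n}$ from Lemma \ref{lem:boundgen}. You spell out the (routine but worth noting) well-definedness of $\tilde{m}_r$ as a homomorphism; otherwise the argument is the same.
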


\begin{proof}
Use the definition of $\rhokahn$ and the bounds on $\Inv^4(\SKb_1(A),\Hcal^\ast_n)$ (see \S \ref{sec:defkahn} \ref{sec:bound}).
\end{proof}

Kahn also raises the issue whether $\tilde{\pi}_r$ is surjective or not (loc. cit.).  We can prove it to be non-surjective
for  biquaternion division algebras \`a la Platonov. 
\begin{prop}  \label{thm:notsurj}
Let $k=\mathbb{Q}_p((t_1))((t_2))$ for a prime $p$.  Suppose $A=(a,t_1)\otimes(b,t_2)$ is a biquaternion division $k$-algebra  
for $a,b\in \Qb_p^\times$. 
Then $\tilde{\pi}_1$ is not surjective.
\end{prop}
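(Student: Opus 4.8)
The plan is to show that $\tilde{\pi}_1$ cannot be surjective by comparing the sizes (or rather the structure) of source and target, using what we already know about Kahn's invariant and the explicit computation of $\SK_1(A)$ for Platonov's examples. First I would note that, since $A$ is a biquaternion division algebra, $\ind_k(A)=4$, so $\overline{n}=2$ and by Lemma \ref{lem:boundgen} together with Remark \ref{rem:kahnbiquatnontriv} we have $\Inv^4(\SKb_1(A),\Hcal^\ast_n)\cong \Zb/2$, generated by $\rhokahna{A}$. Thus the source of $\tilde{\pi}_1$ is exactly $\Zb/2$. So it suffices to exhibit an element of $\Inv^4(\SKb_1(A),\Hcal^\ast_{n,\Acal})$ that is not in the image of $\tilde{\pi}_1$, and for this it is natural to play off Suslin's invariant $\rhosusa{A}$ (the case $r=1$ of Kahn's $\rho_r$), which for biquaternion algebras is \emph{injective} by the discussion in \S\ref{sec:susinvariants}\ref{sec:suslin06} (diagram \eqref{eq:modcompb}).

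The key step is a concrete evaluation on $k=\Qb_p((t_1))((t_2))$. Here $\SKb_1(A)(k)\cong \Zb/2$ by Platonov (\S\ref{sec:platonov}\ref{sec:nontrivsk1}), and $H^4_{4}(k)\cong \Zb/4$ while $H^2_{4}(k)\cong \Zb/4$ by the splitting computations of \S\ref{sec:platonov}\ref{sec:galcoh}. I would compute the cup-product subgroup $H^2(k,\mu_4^{\otimes 2})\cup[A]$ explicitly: since $[A]$ is $2$-torsion, any element $x\cup 2[A]$ vanishes and only $x\cup[A]$ with $x$ coming from the "$2$-torsion layer" survives, so $H^4_{4,A}(k)=H^4_4(k)/(H^2(k,\mu_4^{\otimes 2})\cup[A])$ is a non-trivial quotient of $\Zb/4$ — I expect it to be $\Zb/2$, with the quotient map $\pi_1$ being reduction modulo $2$ after identifying $H^4_4(k)\cong \Zb/4$. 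The map $\tilde{\pi}_1$ on invariant groups is then induced by post-composition with this $\pi_1$. Now $\rhosusa{A}$ is non-zero (injectivity plus $\SKb_1(A)(k)\neq 0$) and lands in $\ker[H^4_{4,A}(k)\to H^4_{4,A}(k(X))]$; I would argue this value generates $H^4_{4,A}(k)\cong \Zb/2$. On the other hand, any element of $\im(\tilde{\pi}_1)$ is $\tilde{\pi}_1$ applied to $c\cdot\rhokahna{A}$ for $c\in\Zb/2$, whose value on the generator of $\SKb_1(A)(k)$ is $\pi_1(c\cdot\rhokahna{A,k}(\text{gen}))$. Kahn's invariant $\rhokahna{A,k}$ takes values in $H^4_4(k)\cong\Zb/4$, and by the bound in \S\ref{sec:defkahn}\ref{sec:bound} (its order divides $\overline{n}=2$) its value on the order-$2$ generator lies in the $2$-torsion $\{0,2\}\subset\Zb/4$ — but $\pi_1$ is reduction mod $2$, so $\pi_1$ kills $\{0,2\}$. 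Hence every element of $\im(\tilde{\pi}_1)$ evaluates to $0$ on the generator of $\SKb_1(A)(k)$, while $\rhosusa{A}$ does not. This contradiction shows $\rhosusa{A}\notin\im(\tilde\pi_1)$, so $\tilde{\pi}_1$ is not surjective.

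The main obstacle I anticipate is the explicit identification of $H^4_{4,A}(k)$ and, more delicately, pinning down that $\pi_1$ really is "reduction mod $2$" on the relevant groups — i.e. controlling the cup product $H^2(k,\mu_4^{\otimes 2})\cup[A]$ precisely enough to see it equals the image of $2H^4_4(k)$ (equivalently that the composite $H^4_4(k)\xrightarrow{\pi_1}H^4_{4,A}(k)$ has kernel exactly $2\Zb/4$). This is exactly the "good comprehension of the cup product with $[A]$" that Kahn flags as the crux, and it should follow from the residue/splitting description \eqref{eq:splitting} applied successively to $t_2$ and $t_1$, tracking where $[A]=(a,t_1)+(b,t_2)$ sits, combined with $\cd(\Qb_p)=2$ and $\Br(\Qb_p)=\Qb/\Zb$. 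A secondary point is verifying that the value of $\rhosusa{A}$ on the generator is itself a generator of $H^4_{4,A}(k)\cong\Zb/2$ rather than zero; this is covered by injectivity of $\rhosus$ for biquaternions (\S\ref{sec:susinvariants}\ref{sec:suslin06}) applied over $k$, once we know $\SKb_1(A)(k)\neq 0$.
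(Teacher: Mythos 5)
Your proposal is correct and matches the paper's proof: both evaluate over $k$ itself, compute $H^4_{4,A}(k)\cong\Zb/2$ via residues on Milnor $K$-theory (writing $[A]$ as $\{a,t_1\}+\{b,t_2\}$) so that $\pi_1:\Zb/4\to\Zb/2$ is reduction modulo $2$, and conclude because the image of any homomorphism $\SKb_1(A)(k)\cong\Zb/2\to H^4_4(k)\cong\Zb/4$ is $2$-torsion and hence killed by $\pi_1$, whereas $\rhosusa{A,k}$ is non-trivial. Your detour through the cyclicity of $\Inv^4(\SKb_1(A),\Hcal^\ast_4)$ and Kahn's invariant is a harmless but unnecessary elaboration --- the paper dispenses with it by noting directly that the non-trivial map $\rhosusa{A,k}$ cannot factor through $H^4_4(k)$.
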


\begin{proof}
In \S\S \ref{sec:platonov} \ref{sec:nontrivsk1} and \ref{sec:galcoh} we saw that  $\SK_1(A)\cong \Zb/2$ and $H^4_4(k)\cong \Zb/4$.  
We can also add a fourth primitive root of unity to $k$ as this does not change the Brauer group.  In this
case we have the Bloch-Kato isomorphism $H^4_{4}(k)\cong K^M_4(k)/4$. 

We now prove $H^4_{4,A}(k) \cong \Zb/2$.   
Under the Bloch-Kato-isomorphism $K^M_2(k)/2\cong \phantom{ }_2 \Br(k)$,
the class of $A$ corresponds to $\{a,t_1\}+\{b,t_2\}\in K^M_2(k)/2$ \cite[Prop. 4.7.1]{gilleszam} so that
$H^2(k,\mu_4^{\otimes 2}) \cup [A]$ is isomorphic to $(K^M_2(k)/4) \cdot (2\{a,t_1\}+2\{b,t_2\})$.  As the
isomorphism $H^4_4(k)\cong \Zb/4$ is retrieved by taking two residues $\partial^3_{t_2} $ and $ \partial^4_{t_1}$, it suffices to determine the group 
\[\partial^3_{t_2} \circ \partial^4_{t_1} \bigl( (K^M_2(k)/4) \cdot (2\{a,t_1\}+2\{b,t_2\})\bigr).\]  By the definition of residues
on Milnor $K$-groups \cite[\S 2]{milnor}, it is clear that this equals $(K^M_1(k)/4) \cdot 2\{a\} +  (K^M_1(k)/4) \cdot  2\{ b\}$.
As we assumed that $\SKb_1(A)$ is not trivial, $a$ can not be a square otherwise $A$ would have been Brauer-trivial.
This means that $(K^M_1(k)/4) \cdot 2\{a\} +  (K^M_1(k)/4) \cdot  2\{ b\} $ is not trivial.  On the other hand it has 2-torsion 
inside $K^M_2(k)/4\cong \Zb/4$ so that indeed $H^4_{4,A}(k) \cong \Zb/2$.

Then $\pi_1:\Zb/4\to \Zb/2$ is the ``modulo 2'' map and $m_1:\Zb/2\to \Zb/4$ is canonical injection.
Suslin proves $\rhosusa{A,k}:\SKb_1(A)(k) \to H^4_{4,A}(k)$ is not trivial \eqref{eq:modcompb},
so it is the identity map on $\Zb/2$.  It is then clear that this can never factor through $H^4_4(k)$ 
so that $\tilde{\pi}_1$ is clearly not surjective.
\end{proof}

\subsubsection{Determining factors} \label{sec:detfactor}

We   prove that for the product of two symbol algebras of degree $n$ the factor $d_A$ appearing in Proposition \ref{prop:mult} only depends on the invariant $\rho$ and the characteristic of $k$.

\begin{prop}
\label{thm:mult}
Let $\rho$ be a moderate invariant of $\SKb_1$ with values in  $\Hcal^4_{\otimes r}$.
Let furthermore $p$ be equal to zero or to any prime and let $m$ be an integer not divisible by $p$.  
Then there exist an integer $i(p,m)\in \Zb/\overline{m^2}$ such 
that for any field $k$ of $\car(k)=p$ containing a primitive $m$-th root of unity $\xi_m$ and for any product $A=(a,b)_m \otimes (c,d)_m$ of two symbol $k$-algebras 
\[ \tilde{m}_r(\rho_{A})=i(p,m)\, \rhokahna{A} \in \Inv^4(\SKb_1(A),\Hcal_{m^2}^\ast)\subset \Zb/\overline{m^2}.\]
\end{prop}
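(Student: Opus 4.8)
The plan is to realise $i(p,m)$ as the integer $d_A$ attached by Proposition \ref{prop:mult} to a single \emph{generic} pair, and then to transport the resulting identity to every other $(k,A)$: first by base change, then by specialisation along discrete valuations with good reduction.

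First I would take $k_0$ to be the prime field of characteristic $p$ with a primitive $m$-th root of unity adjoined (so $k_0=\Qb(\xi_m)$ if $p=0$ and $k_0=\Fb_p(\xi_m)$ if $p$ is a prime; this makes sense since $p\nmid m$), set $k_{\mathrm{gen}}=k_0(t_1,t_2,t_3,t_4)$ with $t_1,\dots,t_4$ algebraically independent, and $A_{\mathrm{gen}}=(t_1,t_2)_m\otimes_{k_{\mathrm{gen}}}(t_3,t_4)_m$. Proposition \ref{prop:mult} furnishes an integer, which I \emph{define} to be $i(p,m)\in\Zb/\overline{m^2}$, with $\tilde m_r(\rho_{A_{\mathrm{gen}}})=i(p,m)\,\rhokahna{A_{\mathrm{gen}}}$. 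Now let $k$ be any field of characteristic $p$ containing $\xi_m$ and $a,b,c,d\in k^\times$; then $k_0\subseteq k$, so $k_{\mathrm{gen}}\hookrightarrow k(t_1,\dots,t_4)$. Since $\rho$, $\rhokahn$ and $\tilde m_r$ are functorial in the base field, and restriction of Kahn's canonical generator is again the canonical generator, this field extension carries the generic identity to $\tilde m_r(\rho_{A'})=i(p,m)\,\rhokahna{A'}$ over $k(t_1,\dots,t_4)$, where $A'=(t_1,t_2)_m\otimes(t_3,t_4)_m$.

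The core of the argument is then a four-step specialisation $t_1\mapsto a$, $t_2\mapsto b$, $t_3\mapsto c$, $t_4\mapsto d$. At the first step, put $F_1=k(t_2,t_3,t_4)$ and let $\widehat{\Ocal}_1=F_1[[t_1-a]]$, a complete discrete valuation ring with fraction field $\widehat{L}_1=F_1((t_1-a))\supseteq k(t_1,\dots,t_4)$ and residue field $F_1$. Because $a\in k^\times\subseteq F_1^\times$, the elements $t_1,t_2$ are units of $\widehat{\Ocal}_1$, so $(t_1,t_2)_m$ spreads out to an Azumaya $\widehat{\Ocal}_1$-algebra; tensoring with $(t_3,t_4)_m$ (already defined over $F_1$) gives an Azumaya $\widehat{\Ocal}_1$-algebra $B_1$ with generic fibre $A'\otimes\widehat{L}_1$, special fibre $\overline{B_1}=(a,t_2)_m\otimes(t_3,t_4)_m$, and index prime to $p$. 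Since $m^2$ is invertible, $\Hcal^\ast_{m^2}$ and $\Hcal^\ast_{m^2,\Acal^{\otimes r}}$ are cycle modules, and good-reduction specialisation for cycle modules, combined with Platonov's isomorphism $\SKb_1(A'\otimes\widehat{L}_1)\cong\SKb_1(\overline{B_1})$ \cite[Thm. 3.12]{sk1niettriveng} (functorial in the residue field; compare Theorem \ref{thm:lift}), shows that the identity over $\widehat{L}_1$ descends to $\tilde m_r(\rho_{\overline{B_1}})=i(p,m)\,\rhokahna{\overline{B_1}}$ over $F_1$. Iterating three more times — specialising $t_2\mapsto b$ over $k(t_3,t_4)[[t_2-b]]$, then $t_3\mapsto c$ over $k(t_4)[[t_3-c]]$, then $t_4\mapsto d$ over $k[[t_4-d]]$, each time keeping the two slots of the affected symbol at units so the algebra has good reduction — ends at $(a,b)_m\otimes(c,d)_m=A$ over $k$ and yields $\tilde m_r(\rho_A)=i(p,m)\,\rhokahna{A}$, as desired.

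I expect the main obstacle to be this descent step: one must check that both the moderate invariant $\rho$ (hence $\tilde m_r(\rho)$) and Kahn's canonical generator $\rhokahn$ are compatible with specialisation along a good-reduction discrete valuation — concretely, that the value of $\rho_{B_i}$ on a lift of $x\in\SKb_1(\overline{B_i})(E')$ is unramified and specialises to $\rho_{\overline{B_i}}(x)$, and that the canonical generator of the finite cyclic invariant group over $\widehat{L}_i$ restricts to the canonical generator over the residue field, so that the integer coefficient is preserved verbatim (with $\Inv^4(\SKb_1(A),\Hcal^\ast_{m^2})$ viewed inside $\Zb/\overline{m^2}$, even when $\ind_k(A)<m^2$ after specialisation). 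Granting this functoriality of the moderate invariants under unramified discrete valuations — which is built into the cycle-module description of invariants recalled in \S\ref{sec:merkurjev} and in \cite{wsuspos} — everything else is bookkeeping with the maps $\tilde m_r$, $m_r$ and $\pi_r$ of \S\ref{sec:ways}.
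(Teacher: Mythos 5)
Your plan is in essence the same as the paper's, but there are two points worth comparing. You both define $i(p,m)$ to be the constant $d_A$ of Proposition~\ref{prop:mult} for the generic algebra $T=(t_1,t_2)_m\otimes(t_3,t_4)_m$ over $k_0(t_1,\dots,t_4)$ (with $k_0$ the prime field with $\xi_m$ adjoined), and you both transport the identity to any $(k,A)$ by identifying $A$ with a specialisation of the generic algebra. The difference is in how the transport is organised and justified. The paper does it in a single step: setting $u_i=t_i-a_i$, it passes to $K'=F((u_1))((u_2))((u_3))((u_4))$, observes that $A\otimes_F K'$ is Brauer-equivalent to $T_{K'}$ (via $\Br(F[[u_1,\dots,u_4]])\cong\Br(F)\hookrightarrow\Br(K')$), and then compares the generic identity on $\SKb_1(T)(K')$ with the desired one on $\SKb_1(A)(F)$ by going \emph{up} along the field extension $F\hookrightarrow K'$, using Platonov's isomorphism $\SKb_1(A)(F)\cong\SKb_1(A)(K')$ and the \emph{injectivity} of the restriction $H^4_{m^2}(F)\hookrightarrow H^4_{m^2}(K')$ supplied by \eqref{eq:splitting}. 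This way the only naturality needed is naturality of invariants with respect to field extensions, which is definitional.

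You instead iterate four times and, at each step, phrase the descent from $\widehat L_i=F_i((t_i-a_i))$ to $F_i$ as ``good-reduction specialisation for cycle modules.'' That is the one place you yourself flag as a potential obstacle, and rightly so: it is not automatic that an invariant $\rho$ (or Kahn's canonical generator) commutes with a specialisation map of cycle modules, so quoting ``built into the cycle-module description'' is hand-waving. The cleanest way to discharge it is exactly the paper's move: instead of pushing the invariant \emph{down} along the residue, restrict your class $x\in\SKb_1(\overline{B_i})(F_i)$ \emph{up} along $F_i\hookrightarrow\widehat L_i$, identify it with the lift in $\SKb_1(B_i\otimes\widehat L_i)$ via Platonov and Brauer-equivalence, apply the identity there, and use that $H^4_{m^2}(F_i)\to H^4_{m^2}(\widehat L_i)$ is injective to conclude equality back over $F_i$. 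Once you phrase it this way, your four-step version becomes a correct (if slightly more laborious) proof, and your flagged gap disappears; but as written, the proposal leans on an unproved compatibility with residues that you would still have to establish. In short: same core idea (generic algebra, Platonov, injectivity of restriction on $H^4_{m^2}$), but the paper's one-shot lift avoids the need to discuss residues/specialisation maps at all, whereas your specialisation phrasing introduces a subsidiary lemma that you should either prove or replace with the lift argument.
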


\begin{rem}
 Although $i(p,m)$ is in general not uniquely determined, we can take a canonical representant as
we know $\Inv^4(\SKb_1(A),\Hcal_{m^2}^\ast)$ is cyclic.  This comes down to taking the class in $\Zb/\overline{m^2}$
satisfying the required relation and such that the representant in $\{0,\ldots,m^2-1\}$ is as low as possible.
It also of course depends on the invariant.  We add an index if necessary to stress which invariant is
compared to Kahn's invariant.  Moreover, it also depends on the exact definition of the injection $\Inv^4(\SKb_1(A),\Hcal_{m^2}^\ast)\subset \Zb/\overline{m^2}$.  For the remainder of the paper, we fix this injection.
\end{rem}

\begin{proof}
Take $k$ the prime field of characteristic $p$ and set $k'=k(\xi_m)$ for an $m$-primitive root of unity  $\xi_m\in k_s$.
Denote  by $\Tcal=(t_1,t_2)_m \otimes (t_3,t_4)_m$ the product of two Azumaya symbol algebra over 
$R=k'[t_1^{\pm 1},t_2^{\pm 1},t_3^{\pm 1},t_4^{\pm 1}]$
where $t_1,t_2,t_3,t_4$ are variables and where Azumaya symbol algebras are defined using the same relations
as used for symbol algebras over a field.  Take $K=k'(t_1,t_2,t_3,t_4)$ and $T=\Tcal_K=(t_1,t_2)_m \otimes (t_3,t_4)_m$,  the product of the respective symbol algebras over $K$. 
By Proposition \ref{prop:mult}, we find a unique $d_{T}\in \Zb/\overline{m^2}$
such that
\begin{equation} \label{eq:genpoint}
\tilde{m}_r(\rho_{T})=d_{T}\, \rhokahna{T}.  
\end{equation} 
We  prove $d_T$ only depends on $m$ and $p$.  

So suppose $F$ is a field of characteristic $p$ containing an  $m$-th primitive root of unity so that $k'\subset F$.
Take any product $A=(a,b)_m\otimes (c,d)_m$ of two symbol algebras of degree $m$ over $F$.  Now $A$ can be obtained
from $\Tcal_F=\Tcal \otimes_R F$ by specialising $t_1,t_2,t_3,t_4$ to $a,b,c,d$ respectively.  

Furthermore, $(a,b,c,d)$ defines a $k$-rational point $x$ of $\spec(F[t_1^{\pm 1},t_2^{\pm 1},t_3^{\pm 1},t_4^{\pm 1}])$.
Take $\Ocal_x$ to be the local ring of $\spec(F[t_1^{\pm 1},t_2^{\pm 1},t_3^{\pm 1},t_4^{\pm 1}])$ in $x$ with maximal ideal $M$.  It is clear that
the completion $\hat{\Ocal}_x$ of $\Ocal_x$ with respect to the $M$-adic topology is $F$-isomorphic to 
$R'=F[[u_1,u_2,u_3,u_4]]$ where $u_1=t_1-a,u_2=t_2-b,u_3=t_3-c,$ 
and $u_4=t_4-d$ (see also \cite[Thm. 19.6.4]{ega4}).
Under the isomorphism $\Br(R')\cong \Br(F)$ from \S \ref{sec:generalising} \ref{sec:lifts}, it is clear that $A_{R'}=A\otimes R'$ is an Azumaya $R'$-algebra
mapping  to $A$.  Furthermore, the $F$-isomorphism of $\hat{\Ocal}_x$ with $R'$ gives
an isomorphism $\Br(\hat{\Ocal}_x)\cong \Br(R')$.  In its turn,
this gives an isomorphism $\Br(\hat{\Ocal}_x)\to \Br(F)$ with inverse  given by taking the tensor product over $F$ with $\hat{\Ocal}_x$.  By construction it sends the class of $\Tcal_{\hat{\Ocal}_x}$ to
the class of $A$.

Let $K'=F((u_1))((u_2))((u_3))((u_4))$, then
$A \otimes_F K'$ is Brauer-equivalent to $\Tcal_{\hat{\Ocal}_x} \otimes_{\hat{\Ocal}_x} K' \cong T_{K'}$.  
We find $\SK_1(A_{K'})\cong \SK_1(T_{K'})$ (as in \S \ref{sec:generalising} \ref{sec:lifts}).
Furthermore, 
\eqref{eq:splitting} gives an injection $H^4_{m^2}(F)\to H^4_{m^2}(K')$.
By functoriality of invariants, the diagram
\[
\xymatrix{ 
\SK_1(A) \ar[d]_\cong  \ar[r]^\rho & H^4_{m^2}(F) \ar[d] \\
\SK_1(T_{K'})  \ar[r]_{ \rho} & H^4_{m^2}(K' ) \\
}
\]
commutes both for $\tilde{m}_r(\rho)$ and $\rhokahn$. 
Then by \eqref{eq:genpoint}, we get $\tilde{m}_r(\rho_{A})=d_{T}\rhokahna{A}$. 
\end{proof}

In particular, we find such relations for $\rho=\rhosusoud,\rhosus$, and the $\rho_r$'s.  

\subsubsection{Non-triviality of Kahn's invariants} \label{sec:nontrivkahn}

As mentioned
in Remark \ref{rem:kahnbiquatnontriv}, $\rhokahn$ is not-trivial 
 for biquaternion algebras (of index 4).  We generalise this to the product of two cyclic algebras 
\`a la Platonov (\S \ref{sec:platonov}).  Therefore, we compare $\rhokahn$ to $\rhosusoud$ as 
this invariant is non-trivial for Platonov's examples (\S \ref{sec:susinvariants} \ref{sec:suslin91}).  This means we have to work with $\Hcal^\ast_{n,\Acal^{\otimes 2}}$ for
suitable $n$ and $A$.  (In the same way as in Proposition \ref{thm:notsurj}, these give also examples of non-trivial $\tilde{\pi}_2$.)

\begin{thm} \label{thm:nottriv}
Let $k$ be $p$-adic field containing a $n^3$-th primitive root unity.  Suppose
$A=(a,t_1)_n\otimes (c,t_2)_n$ is a division $k((t_1))((t_2))$-algebra, then $\rhokahna{A}$ is not trivial. 
If $n=q_1\cdot \ldots \cdot q_r$ for  different primes $q_i$, then
\[ \Inv^4(\SKb_1(A),\Hcal^\ast_{n^2}) \cong \Zb/n. \]
Moreover if $n$ is odd, the integer $i_{\text{S91}}(0,n)\in \Zb/\overline{n^2}$ defined in Proposition \ref{thm:mult}
for $\rhosusoud$ is not trivial.  
\end{thm}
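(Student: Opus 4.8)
The plan is to transport the non-triviality of Suslin's $1991$ invariant $\rhosusoud$ on Platonov's examples to $\rhokahna{A}$ by means of Proposition~\ref{thm:mult}. Put $F=k((t_1))((t_2))$; since $k$, hence $F$, contains $\mu_{n^2}$, all Tate twists below are trivial and I write $H^j_m(\cdot)$ for $H^j(\cdot,\Zb/m)$. As $\mu_n\subseteq k$, the algebra $(a,t_1)_n$ is the base change to $F$ of a cyclic algebra $(k(\sqrt[n]{a})/k,\sigma_1,t_1)$, and similarly for $(c,t_2)_n$; since $A$ is a division algebra of degree $n^2$ we have $\ind_k(A)=n^2$, the index formulas for symbol algebras over $k((t_1))((t_2))$ force $a$ and $c$ to have order $n$ in $k^\times/(k^\times)^n$, so that $\per_k(A)=n$, and Platonov's theorem (\S \ref{sec:platonov} \ref{sec:nontrivsk1}) gives $\SKb_1(A)(F)\cong\Zb/n$. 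By \S \ref{sec:susinvariants} \ref{sec:suslin91} the invariant $\rhosusouda{A}\in\Inv^4(\SKb_1(A),\Hcal^\ast_{n^2,\Acal^{\otimes2}})$ is non-trivial on $\SKb_1(A)(F)$. Applying Proposition~\ref{thm:mult} over $F$ with $r=2$ (admissible, as $\car(F)=0$ and $\mu_n\subseteq F$) produces $i_{\text{S91}}(0,n)\in\Zb/\overline{n^2}$ with $\tilde m_2(\rhosusouda{A})=i_{\text{S91}}(0,n)\,\rhokahna{A}$ in $\Inv^4(\SKb_1(A),\Hcal^\ast_{n^2})$. Hence, to obtain the last assertion and the first one in the case $n$ odd, it is enough to show $\tilde m_2(\rhosusouda{A})\neq0$.

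To that end I would run all the groups through the iterated residues $\partial^3_{t_1},\partial^4_{t_2}$. By \S \ref{sec:galcoh}, $\partial^3_{t_1}\partial^4_{t_2}$ identifies $H^4_{n^2}(F)$ with $H^2_{n^2}(k)\cong\Zb/n^2$ (using $\cd(k)=2$ and $\Br(k)=\Qb/\Zb$), and under this identification the inclusion $H^4_n(F)\hookrightarrow H^4_{n^2}(F)$ and the map $\cdot m$ of \S \ref{sec:compkahnmod} \ref{sec:ways} (here $m=\per_k(A)=n$) compose to multiplication by $n$ on $\Zb/n^2$; this composite is exactly $m_2$. On the other hand, the construction underlying the non-triviality of $\rhosusoud$ provides, for $\rhosusoud$, the $r=2$ analogue of diagram~\eqref{diag:motiv}: $\partial^3_{t_1}\partial^4_{t_2}$ identifies $H^4_{n^2,A^{\otimes2}}(F)$ with $\Zb/n^2$ modulo the subgroup $M:=\partial^3_{t_1}\partial^4_{t_2}\bigl(H^2(k,\mu_{n^2}^{\otimes2})\cup 2[A]\bigr)$, and $\rhosusouda{A,F}$ takes a non-zero value in this quotient. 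The crux is to prove $M=n\Zb/n^2$, the full $n$-torsion of $\Zb/n^2$: decomposing the relevant Brauer classes via the splitting~\eqref{eq:splitting} over the tower $k\subset k((t_1))\subset F$, using that the residues $2\bar a,2\bar c\in k^\times/(k^\times)^{n^2}$ of $2[A]$ have order exactly $n$ (this uses that $n$ is odd), and using that the local cup product $H^1(k,\mu_{n^2})\times H^1(k,\mu_{n^2})\to H^2_{n^2}(k)=\Zb/n^2$ is a perfect pairing. Once $M=n\Zb/n^2$, the map $m_2$ becomes the canonical injection $\Zb/n\cong(\Zb/n^2)/(n\Zb/n^2)\hookrightarrow\Zb/n^2$; composed with the non-zero $\rhosusouda{A,F}$ it is non-zero, so $\tilde m_2(\rhosusouda{A})\neq0$, which settles the last assertion and the first one for $n$ odd.

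For the first assertion in general and the middle one, I pass to a Brauer decomposition $A\sim D_1\otimes\cdots\otimes D_r$ with $\ind_k(D_i)=q_i^{2e_i}$, as in the proof of Lemma~\ref{lem:boundgen}. Each $D_i$ is Brauer-equivalent to a product of two symbol $F$-algebras of degree $q_i^{e_i}$ of the same Platonov shape; the decomposition is compatible with $\SKb_1$, with $\Inv^4(\SKb_1(-),\Hcal^\ast)$ and with $\rhokahn$, so $\rhokahna{A}$ corresponds, up to a unit, to the tuple $(\rhokahna{D_i})_i$. For $q_i$ odd, the computation of the previous paragraph applied to $D_i$ (with $q_i^{e_i}$ in place of $n$) gives $\rhokahna{D_i}\neq0$; for $q_i=2$ it follows from Remark~\ref{rem:kahnbiquatnontriv}, directly when $e_i=1$ and by a further reduction to the biquaternion case when $e_i\geq2$. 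Hence $\rhokahna{A}\neq0$, which is the first assertion. If moreover $n=q_1\cdots q_r$ is squarefree, every $\rhokahna{D_i}$ is non-trivial, so each $\Inv^4(\SKb_1(D_i),\Hcal^\ast_{q_i^2})$ is a non-trivial cyclic group of order dividing $\overline{q_i^2}=q_i$ (Lemma~\ref{lem:boundgen}), hence is $\Zb/q_i$, and summing gives $\Inv^4(\SKb_1(A),\Hcal^\ast_{n^2})\cong\Zb/n$.

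The main obstacle is the identification $M=n\Zb/n^2$ in the second paragraph: computing the double residue of $H^2(k,\mu_{n^2}^{\otimes2})\cup 2[A]$ needs careful bookkeeping of residues over the tower $k\subset k((t_1))\subset F$ together with local class field theory over $k$. It is also exactly here that $n$ odd is used: for $n$ even, $2[A]$ has order $n/2$, the subgroup $M$ shrinks to $2n\Zb/n^2$, and the induced map $m_2$ acquires a kernel of order $2$ meeting the image of $\rhosusouda{A,F}$ (visibly for $n=2$, the biquaternion case), so $\tilde m_2(\rhosusouda{A})$ may vanish — which is why for even $n$ the first assertion has to be routed through Remark~\ref{rem:kahnbiquatnontriv} rather than through $\rhosusoud$.
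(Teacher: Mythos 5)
Your strategy for $n$ odd matches the paper's: reduce everything to showing $\tilde{m}_2(\rhosusouda{A})\neq 0$ via Proposition~\ref{thm:mult}, after computing the subgroup $M$ that is killed when one passes from $H^4_{n^2}(F)$ to $H^4_{n^2,A^{\otimes 2}}(F)$. Your route to $M=n\Zb/n^2$ through the perfect local cup-product pairing is a legitimate alternative to the paper's direct residue calculation (the paper invokes the argument of Proposition~\ref{thm:notsurj}); both give, for $n$ odd, $H^4_{n^2,A^{\otimes2}}(F)\cong\Zb/n$ and $m_2$ the canonical injection, and the rest of the odd case (including $i_{\text{S91}}(0,n)\neq 0$ and the isomorphism for squarefree $n$) follows as you say.

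The gap is the case $n$ even, more precisely the $2$-primary factor $D_i$ when $e_i\geq 2$. You correctly diagnose that the unrefined $m_2$ (multiplication by $\per_F(A)=n$) has a kernel of order $2$ which can swallow the image of $\rhosusoud$; but your proposed fix, ``a further reduction to the biquaternion case when $e_i\geq 2$'', is never carried out and is not available from what you cite: Remark~\ref{rem:kahnbiquatnontriv} concerns only biquaternion (index $4$) division algebras, and there is no mechanism offered to pass from the index-$2^{2e_i}$ factor $D_i$ to a biquaternion algebra while controlling $\rhokahna{D_i}$. As written, the first assertion is therefore unproved whenever the $2$-part of $n$ is $\geq 4$ (in particular when $n$ is a $2$-power $\geq 4$).

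The paper resolves this without any Brauer decomposition for the first assertion, using the refinement of $\tilde{m}_r$ mentioned immediately after its definition in \S \ref{sec:compkahnmod} \ref{sec:ways}: one may replace the multiplier $m=\per(A)$ by any $m'$ for which $H^2\cup r[A]$ has $m'$-torsion. For $r=2$ and $n$ even this allows $n/2$ in place of $n$; the resulting $m_2:H^4_{n^2,A^{\otimes2}}(F)\cong\Zb/(2n)\to H^4_{n^2}(F)\cong\Zb/n^2$ is again the canonical injection, so the Suslin route goes through uniformly for all $n$ (re-running the argument of Proposition~\ref{thm:mult} with $n/2$ in place of $n$, as the paper indicates). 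You should incorporate that refinement rather than detour through biquaternion algebras.
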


\begin{proof}
We know $\SK_1(A)\cong \Zb/n$ by \S \ref{sec:platonov}.  Furthermore $H^4_{n^2}(k)=\Zb/n^2$
 as the results in \S \ref{sec:platonov} \ref{sec:galcoh} hold also when one replaces $\Qb_p$ by 
a finite extension of it.  

To calculate $H^4_{n^2,A^{\otimes 2}}(k)$, we use a analogous argument as in the proof of Proposition \ref{thm:notsurj}.  If $n$ is odd, we also find $H^4_{n^2,A^{\otimes 2}}(k)=\Zb/n$ as in this case $\per_k(A^{\otimes 2})=\per_k(A)$.  If $n$ is even, $\per_k(A^{\otimes 2})=n/2$ so that $H^4_{n^2,A^{\otimes 2}}(k)=\Zb/(2n)$.  
In either case, $m_2:H^4_{n^2,A^{\otimes 2}}(k)\to H^4_{n^2}(k)$ is the canonical injection ($m_2$ is the multiplication by $m$ for $m=n$
if $n$ odd and $m=n/2$ if $n$ even).

Suslin proves $\rhosusouda{A}$ is not trivial (on the field $k$) \cite[Thm. 4.8]{sk1niettriveng}.
If $n$ is odd,  $\rhokahna{A}$ is not trivial (on $k$) by  Proposition \ref{prop:mult} and
hence by definition $i_{\text{S91}}(0,n^2)\neq 0 \in \Zb/\overline{n^2}$.  If $n$ is even, a similar argument as
in the proof of  Proposition \ref{prop:mult} gives the non-triviality of $\rhokahna{A}$
(mutatis mutandis $m$ by $n/2$).

By the bound on the invariant group  (\S \ref{sec:defkahn} \ref{sec:bound}) and a Brauer decomposition of $A$ with
a related decomposition of invariants in primary parts, the isomorphism statement follows.
\end{proof}

\subsection{Wild case}

We  continue the comparison in the wild case.  Using a lift, we can generalise the statement to any central simple
algebra with some loss of information.  This does let us prove a relation between the several $i(p,n)$'s.

Let $A$ be a central simple $k$-algebra of $\ind_k(A)=n$ and $\per_k(A)=m$. We again have morphisms for any integer $r$
\[  \tilde{m}_r:\Inv^4(\SKb_1(A),\Hcal^\ast_{n,\Acal^{\otimes r}}) \to \Inv^4(\SKb_1(A),\Hcal^\ast_{n}) \]
induced by the multiplication for any field extension $F$ of $k$:
\[ m_r: H^4_{n,A^{\otimes r}}(F) \overset{\cdot m} \to H^4_{n/m}(F) \hookrightarrow H^4_{n}(F). \]
Note that we can also define maps $\tilde{\pi}_r$ as in \S \ref{sec:compkahnmod} \ref{sec:ways}.

\begin{prop} \label{thm:compwild}
Let $\rho$ be a moderate invariant of $\SKb_1$ with values in $\Hcal^4_{\otimes r}$.
Suppose $k$ is a field of $\car(k)=p>0$ and let $A=[a,b)_p\otimes [c,d)_p$ be the product of two $p$-algebras over $k$,
 then
\[ \tilde{m}_r(\tilde{\rho}_A) = i(0,p) \rhokahnga{A}. \]
\end{prop}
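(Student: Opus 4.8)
The plan is to reduce to characteristic $0$ via Theorem~\ref{thm:lift} and then feed the result into Proposition~\ref{thm:mult}, exactly in the spirit of that proof. Fix a $p$-ring $R$ with residue field $k$ and fraction field $K$, and let $B$ be the lifted Azumaya $R$-algebra of $A$; then $\car(K)=0$, and since the lift preserves index and period (\S\ref{sec:generalising}~\ref{sec:lifts}), $\ind_K(B_K)=p^2$ and $\per_K(B_K)\in\{1,p\}$. By construction (Remark~\ref{rem:multmod}), $\tilde{\rho}_A$ and $\rhokahnga{A}$ are the wild generalisations of $\rho_{B_K}$ and $\rhokahna{B_K}$, and the passage to the wild generalisation is injective (Theorem~\ref{thm:lift}). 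Moreover $\tilde{m}_r$ commutes with this passage: the map $m_r$ is the composite of the projection $H^4_n\to H^4_{n,A^{\otimes r}}$, the inclusion $H^4_{n/m}\hookrightarrow H^4_n$, and multiplication by $m$, each compatible with Kato's lifting maps $i^\ast$ (cf.\ Remark~\ref{rem:definj} and \cite[Prop.~4.10]{wsuspos}); so $\tilde{m}_r(\tilde{\rho}_A)$ satisfies the lifting diagram of Theorem~\ref{thm:lift} for $\tilde{m}_r(\rho_{B_K})$, and by uniqueness it is the wild generalisation of $\tilde{m}_r(\rho_{B_K})$. It therefore suffices to prove the identity over the characteristic-$0$ field $K$:
\[ \tilde{m}_r(\rho_{B_K})=i(0,p)\,\rhokahna{B_K} \quad\text{in } \Inv^4(\SKb_1(B_K),\Hcal^\ast_{p^2}). \]

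If $\SKb_1(B_K)$ is trivial this is vacuous, so assume $A$ and $B_K$ are division algebras of index $p^2$. A $p$-ring can never contain a primitive $p$-th root of unity for odd $p$, so I cannot arrange $\xi_p\in K$ directly; instead I base change to $L=K(\xi_p)$, an extension of degree dividing $p-1$, hence prime to $p$. From the explicit description of $i^\ast$ recalled in Remark~\ref{rem:definj}, each of the two degree-$p$ components of $[B_K]$ is the image under $i^\ast$ of the symbol $\bar a\otimes \bar b$ in $H^2_p(k)$, hence has the form $\chi\cup (b)$ with $\chi\in H^1(K,\Zb/p)$ and $b\in R^\times$; after adjoining $\xi_p$ such a class becomes an honest symbol class, so $B_L$ is Brauer-equivalent to a product $(a_1,b)_p\otimes_L(c_1,d)_p$ of two symbol $L$-algebras. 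Proposition~\ref{thm:mult}, applied with characteristic $0$ and $m=p$, now gives $\tilde{m}_r(\rho_{B_L})=i(0,p)\,\rhokahna{B_L}$.

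It remains to descend from $L$ to $K$. By functoriality $\tilde{m}_r(\rho_{B_L})$ is the restriction of $\tilde{m}_r(\rho_{B_K})$ along the map $\Inv^4(\SKb_1(B_K),\Hcal^\ast_{p^2})\to\Inv^4(\SKb_1(B_L),\Hcal^\ast_{p^2})$, and likewise $\rhokahna{B_L}$ is the restriction of $\rhokahna{B_K}$, both groups being cyclic of order dividing $\overline{p^2}=p$ (\S\ref{sec:defkahn}~\ref{sec:bound}) and restriction carrying Kahn's canonical generator to Kahn's canonical generator. Since $\SKb_1(B_K)$ is $p$-primary torsion while $[L:K]$ is prime to $p$, a transfer argument — the composite of restriction with a corestriction on these invariant groups is multiplication by $[L:K]$, invertible on a $p$-group — shows this restriction map is injective. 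Hence the relation over $L$ forces $\tilde{m}_r(\rho_{B_K})=i(0,p)\,\rhokahna{B_K}$, and taking wild generalisations yields $\tilde{m}_r(\tilde{\rho}_A)=i(0,p)\,\rhokahnga{A}$.

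I expect the descent to be the main obstacle. The delicate point is not merely that restriction $\Inv^4(\SKb_1(B_K),\Hcal^\ast_{p^2})\to\Inv^4(\SKb_1(B_L),\Hcal^\ast_{p^2})$ is injective, but that Kahn's \emph{canonical} generator is compatible with base change along $L/K$, so that no spurious unit of $\Zb/p$ sneaks into the comparison; this, together with the availability of a corestriction on these invariant groups, relies on $\gcd(p-1,p)=1$ and on the precise way the canonical generator is singled out in \cite{kahnsk12}. By contrast, the compatibility of $\tilde{m}_r$ with wild generalisation and the identification of $[B_L]$ as a product of two symbol classes are routine.
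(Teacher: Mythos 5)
Your proof is correct and follows the same overall strategy as the paper --- lift $A$ to $B_K$ over a field of characteristic $0$ via Theorem~\ref{thm:lift}, invoke Proposition~\ref{thm:mult} over the lifted field, and conclude via the injectivity of the lifting morphism on invariant groups --- but it supplies a refinement that the paper's terse proof glosses over. The paper simply asserts that $B_K$ is a product of two symbol algebras of degree $p$ over $K$ and feeds this directly into Proposition~\ref{thm:mult}. As you rightly note, for odd $p$ this cannot be taken literally: $K=\Frac(R)$ for a $p$-ring $R$ has $v(p)=1$, while $v(p)=(p-1)\,v(\xi_p-1)$ would have to hold if $\xi_p\in K$, so $\mu_p\not\subset K$ and $B_K$ is only a tensor product of cyclic algebras. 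Your detour through $L=K(\xi_p)$ (of degree dividing $p-1$), followed by a restriction--corestriction descent back to $K$ --- legitimate because $\Inv^4(\SKb_1(B_K),\Hcal^\ast_{p^2})$ is $p$-torsion by Lemma~\ref{lem:boundgen} while $[L:K]$ is prime to $p$ --- is precisely the step needed to make the appeal to Proposition~\ref{thm:mult} watertight. You also correctly flag the one genuinely delicate point, namely that Kahn's canonical generator must restrict to Kahn's canonical generator along $L/K$; this compatibility is used implicitly throughout \S\ref{sec:compkahnmod} (for instance in the specialisation argument proving Proposition~\ref{thm:mult}), so your argument is consistent with the rest of the paper, but making the hypothesis explicit, as you do, is the right thing to do. One small point worth stating in the write-up: after base change, $B_L$ is a priori only Brauer-equivalent to a product of two symbol algebras, not isomorphic to one; this is harmless since $\SKb_1$ and the coefficient modules $\Hcal^\ast_{n,\Acal^{\otimes r}}$ depend only on the Brauer class, but it deserves a sentence.
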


\begin{proof}
Take a $p$-ring $R$ with residue field $k$ and fraction field $K$.
Remark first that the lifted Azumaya $R$-algebra $B$ of $A$ is (after base extension to $K$) 
a product of two symbol algebras of degree $p$.  This follows from \cite[Prop. 4.7.1 \& Prop. 9.2.5]{gilleszam} 
and the injection $H^2_{p^2}(k)\to H^2_{p^2}(K)$ \cite[Proof of Thm. 1 \& 3]{katogalcoh}.

The result follows  immediately from the injections
\begin{eqnarray*} 
\Inv^4(\SKb_1(B_K),\Hcal_{p^2}^\ast) & \to & \Inv^4(\SKb_1(A),\Hcal_{p^2}^\ast) \quad \text{ and } \\
\Inv^4(\SKb_1(B_K),\Hcal_{p^2,\Bcal_K^{\otimes r}}^\ast)& \to & \Inv^4(\SKb_1(A),\Hcal_{p^2,\Acal^{\otimes r}}^\ast)
\end{eqnarray*}
defined by lifting invariants and the relations for $\rho_{B_K}$ and $\rhokahna{B_K}$ (Proposition \ref{thm:mult}).
\end{proof}

\begin{rem}
In the view of Remark \ref{rem:multmod}, we could even refine the statement in the moderate case.  
If $k$ is a field of $\car(k)=p>0$ and if $A=(a,b)_n\otimes (c,d)_n$ is the product of
two symbol algebras for $n\in k^\times$, then a similar statement holds as $A$ lifts to 
the product of two symbol algebras of degree $n$ in characteristic 0. If $\tilde{\rho}_A=\rho_A$, then $i(p,n)$ is a multiple of $i(0,n)$ in $\Zb/\overline{n}$. Indeed, $\rhokahna{A}$ is a generator of $\Inv^4(\SKb_1(A),\Hcal^\ast_n)\subset \Zb/\overline{n}$ and
for some integer $d$
\[ i(p,n) \rhokahna{A} = \tilde{m}_r(\rho_A) = i(0,n)\, \rhokahnga{A} = i(0,n)\, d\, \rhokahna{A}. \]
\end{rem}

\subsection{Formula on the centre}

We can now generalise the formula of Merkurjev on the centre of a biquaternion algebra (\cite[Ex. p.70]{merkrostinv} -- see also \cite[Ex. 17.23]{kmrt} and \S \ref{sec:nontrivbiquat}) to the tensor product of two symbol algebras.
We first prove a general formula and
later we prove a finer result using Theorem \ref{thm:nottriv}.

\subsubsection{General result} We  again use cohomological invariants, however not invariants of algebraic
groups as in \S \ref{sec:merkurjev}, but rather invariants as introduced in \cite[Ch. I]{cohinv}.  
These are also natural transformations of functors, but rather a transformation of a functor $B:\kfields \to \Sets$
into a functor $H:\kfields \to \Groups$.  

\begin{prop} \label{prop:centre}
Let $p$ be equal to 0 or to any prime and let $n>0$ be an integer not divisible by $p$.  Let 
 $\varphi$ be the canonical map $H^4_{m}(k)\to H^4_{n^2}(k)$ (for $m=\overline{n^2}$).
There exists an integer $j(p,n)$ such that the following formula holds for any field $k$ of $\car(k)=p$ containing a primitive $n^2$-th root of unity $\zeta$ 
and for any product $A=(a,b)_n\otimes (c,d)_n$ of two symbol $k$-algebras:
 \[ \rhokahna{A,k}([\zeta]) = \varphi \left[ j(p,n)\, h^4_{m}(\{ a,b,c,d\})\right] \in H^4_{n^2}(k) \]
\end{prop}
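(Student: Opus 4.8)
The plan is to reduce the statement to one generic computation, exactly along the lines of the proof of Proposition~\ref{thm:mult}, and then to carry out that computation.

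First I would set up the generic situation. Let $k_0$ be the prime field of characteristic $p$, put $k'=k_0(\zeta)$ for a primitive $n^2$-th root of unity $\zeta$, and let $\Tcal=(t_1,t_2)_n\otimes(t_3,t_4)_n$ be the product of two Azumaya symbol algebras over $R=k'[t_1^{\pm1},t_2^{\pm1},t_3^{\pm1},t_4^{\pm1}]$, with fraction field $K=k'(t_1,t_2,t_3,t_4)$ and $T=\Tcal_K$. As $\Nrd_{T/K}(\zeta\cdot 1)=\zeta^{n^2}=1$, the class $[\zeta]\in\SKb_1(T)(K)$ is defined. Following the specialisation machinery from the proof of Proposition~\ref{thm:mult} — passing through the complete local ring $F[[u_1,\dots,u_4]]$ and the iterated Laurent field $K'=F((u_1))\cdots((u_4))$, using the injection $H^4_{n^2}(F)\hookrightarrow H^4_{n^2}(K')$ of \eqref{eq:splitting}, the isomorphism $\SKb_1(A)(F)\cong\SKb_1(T)(K')$ (Platonov, as in \S \ref{sec:generalising} \ref{sec:lifts}) carrying $[\zeta]$ to $[\zeta]$, the functoriality of $\rhokahn$, and Hensel's lemma to see that the $t_i$ specialise to $a,b,c,d$ in $K^M_1(K')$ modulo powers prime to $p$ — it suffices to prove that
\[ \rhokahna{T,K}([\zeta])=\varphi\bigl(j\cdot h^4_m(\{t_1,t_2,t_3,t_4\})\bigr)\in H^4_{n^2}(K) \]
for some integer $j$; one then sets $j(p,n):=j$, which depends only on $p$ and $n$, and recovers the general formula by specialising $t_1,t_2,t_3,t_4\mapsto a,b,c,d$.

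Next I would show the generic value is a scalar multiple of the symbol $h^4_{n^2}(\{t_1,t_2,t_3,t_4\})$. The rule $(a,b,c,d)\mapsto\rhokahna{A,F}([\zeta])$ with $A=(a,b)_n\otimes(c,d)_n$ is a cohomological invariant in the sense of \cite[Ch. I]{cohinv} of $F\mapsto(F^\times)^4$ (on fields containing $\zeta$) with values in $\Hcal^4_{n^2}$, whose value at the generic point is $\rhokahna{T,K}([\zeta])$; since $\Tcal$ is Azumaya over $R$ and $\zeta\cdot1$ is a unit of reduced norm $1$, and Kahn's invariant is compatible with specialisation along good reduction (see below), this value is unramified at every codimension-one point of $\spec R=\Gb_{m,k'}^4$. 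By the usual decomposition of the unramified cohomology of a split torus, $\rhokahna{T,K}([\zeta])=\sum_{S\subseteq\{1,2,3,4\}}\delta_S\cup\{t_i:i\in S\}$ with each $\delta_S\in H^{4-|S|}_{n^2}(k')$. To kill the terms with $S\neq\{1,2,3,4\}$ I would specialise $t_1\to1$: then $(1,t_2)_n$ is split, so $T$ becomes a matrix algebra over the cyclic algebra $D=(t_3,t_4)_n$, whence $\SKb_1(D)=0$ \cite{wang}, $[\zeta]=0$, and the specialisation of $\rhokahna{T,K}([\zeta])$ vanishes; comparing with $\sum_{1\notin S}\delta_S\cup\{t_i:i\in S\}$ and using uniqueness of the analogous decomposition over $\Gb_{m,k'}^3$ forces $\delta_S=0$ whenever $1\notin S$. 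Repeating with $t_2,t_3,t_4\to1$ leaves only $S=\{1,2,3,4\}$, so $\rhokahna{T,K}([\zeta])=\delta\cdot h^4_{n^2}(\{t_1,t_2,t_3,t_4\})$ with $\delta\in\Zb/n^2$.

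Finally I would pin down $\delta$. For the generic $T$ one has $\ind_K(T)=n^2$, so by \S \ref{sec:defkahn} \ref{sec:bound} the group $\Inv^4(\SKb_1(T),\Hcal^\ast_{n^2})$ is cyclic of order dividing $\overline{n^2}=m$, whence $m\cdot\rhokahna{T,K}([\zeta])=0$. Three successive residues send $h^4_{n^2}(\{t_1,t_2,t_3,t_4\})$ to the class of $t_4$ in $k'(t_4)^\times/(k'(t_4)^\times)^{n^2}$, which has order $n^2$; so $h^4_{n^2}(\{t_1,t_2,t_3,t_4\})$ has order $n^2$ and therefore $\delta=(n^2/m)\,j$ for some integer $j$. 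As $\varphi\bigl(h^4_m(\{t_1,t_2,t_3,t_4\})\bigr)=(n^2/m)\,h^4_{n^2}(\{t_1,t_2,t_3,t_4\})$ by the definition of the canonical map $\varphi$ and the compatibility of Galois symbols with $\mu_m\hookrightarrow\mu_{n^2}$, this yields the desired formula. The main obstacle is the unramifiedness of $\rhokahna{T,K}([\zeta])$ on $\Gb_{m,k'}^4$, i.e.\ the compatibility of Kahn's canonical invariant with good reduction, since the excerpt defines $\rhokahn$ only over fields; I would extract it from Merkurjev's identification \eqref{eq:isomerk} of $\rhokahn$ with a multiplicative element of $A^0(\SLb_1(\Tcal),\Hcal^4_{n^2})$ over the base $R$ and the residue rules of the cycle module, taking care that the canonical generator is compatible with the Brauer-equivalences $\SKb_1(M_n(D))\cong\SKb_1(D)$ used above (under which $[\zeta]$ corresponds to $[\zeta^n]$, still trivial in $\SKb_1(D)$).
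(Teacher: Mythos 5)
Your proof is essentially correct and follows the same overall strategy as the paper: reduce to the generic case $T=(t_1,t_2)_n\otimes(t_3,t_4)_n$ over $K=k'(t_1,\dots,t_4)$ via the specialisation machinery of Proposition~\ref{thm:mult}, identify $\rhokahna{T,K}([\zeta])$ as a multiple of the symbol $\{t_1,t_2,t_3,t_4\}$ using specialisations $t_i\mapsto 1$ and Wang's theorem, and account for the factor $n^2/m$ via the $m$-torsion of $\rhokahn$ from Lemma~\ref{lem:boundgen}.

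The one genuine difference is in how you establish that the generic value is supported on the full symbol. The paper treats $(a,b,c,d)\mapsto\rhokahna{(a,b)_n\otimes(c,d)_n,F}([\zeta])$ as a cohomological invariant of the functor $F\mapsto H^1(F,\mu_m)^4$ in the sense of Garibaldi--Merkurjev--Serre and cites the classification of such invariants directly \cite[Prop.~2.1 \& \S 3.1]{garibaldi}, \cite[Ex.~16.5]{cohinv}: they are $H^\ast_m(k')$-linear combinations of cup products of the projections, which gives the expression as a sum of pure symbols $\lambda\{z_1,z_2,z_3,z_4\}$ with each $z_i$ a $t_j$ or a constant, with no extra work. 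You instead rederive this decomposition from the unramified cohomology of the split torus $\Gb_{m,k'}^4$, which requires the additional step — which you correctly flag as the main obstacle — of proving that $\rhokahna{T,K}([\zeta])$ is unramified over $R$. That step is true (it is essentially the same fact that underlies the GMS classification, namely that cohomological invariants take unramified values on points with good reduction), but making it precise for $\rhokahn$ over the ring $R$, as opposed to over fields, requires a careful argument that the paper side-steps entirely by working within the framework of invariants of $H^1(\cdot,\mu_m)^4$ rather than invariants of the torus; the GMS route buys you the decomposition without having to re-establish the unramifiedness compatibility by hand.

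One small remark: both the paper and your argument appeal to Wang's theorem to get $\rhokahna{T_1,F}([\zeta])=0$ after $t_1\mapsto 1$, but strictly speaking $T_1$ is Brauer-equivalent to a degree-$n$ symbol algebra whose index need not be square-free, so Wang's theorem as stated does not apply directly. What one actually needs — and what your parenthetical ``$[\zeta]$ corresponds to $[\zeta^n]$, still trivial'' gestures at — is that $\zeta^n$ is an $n$-th root of unity, hence a commutator in the symbol algebra $(t_3,t_4)_n$ (from the relation $uv=\zeta_n vu$), so $[\zeta]$ dies in $\SKb_1(T_1)$ regardless of whether $\SKb_1(T_1)$ itself vanishes. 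This is a shared wrinkle, not a gap in your proposal specifically.
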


\begin{rem}
Remark that as 
$k$ contains an $n^2$-th primitive root of unity, $\mu_{n^2}^{\otimes i}\cong \Zb/n^2$ for any $i> 0$.
Note also that  $\varphi \bigl[ h^4_{m}(\{ a,b,c,d\})\bigr] = (n/m)\, h^4_{n^2}(\{ a,b,c,d\})$.

This expression is compatible to the
biquaternion case keeping in mind diagrams \eqref{eq:modcompb} and \eqref{eq:modcompa}.
Also, the integer
$j(p,n)$ in the theorem is not uniquely determined, but can be picked canonically by taking the smallest positive integer
satisfying the relation.  Moreover, $j(p,n)$ depends on the $n$-th primitive root of unity used in the definition of the
symbol algebra and of the choice of $n^2$-th primitive root of unity $\zeta$.  We are interested in the invertibility 
of $j(p,n)$ modulo $m$ and therefore the exact choices do not matter, so we do not incorporate these in the notation.
\end{rem}

\begin{proof}
As $\rhokahn$ has $m$-torsion (Lemma \ref{lem:boundgen}), we can assume $\rhokahna{A,k}([\zeta])$ to have values in
$H^4_{m}(k)$.

Let  $k$ be the prime field of characteristic $p$ and set $k'=k(\zeta)$ for $\zeta\in \bar{k}$ a primitive $n^2$-th root of unity.
Take  $T=(t_1,t_2)_n\otimes (t_3,t_4)_n$ over $F=k'(t_1,t_2,t_3,t_4)$.
We  prove the formula for $T$.  The proof ends by specialising to $A$ as in the proof of Proposition
\ref{thm:mult}.

Let $B:\kfields \to \Sets$ be the functor attaching to a field extension $F$ of $k$
the Galois cohomology group $H^1(F,\mu_m)^4$ and $H$ associating 
$H^4(F,\mu_m^{\otimes 4})$ with $F$.  
Now $\rhokahn$ induces a cohomological invariant of $B$ into $H$.  Indeed,
using the isomorphism $H^1(F,\mu_n)\cong F^\times/(F^\times)^n$, we associate with any four representants
$a,b,c,d\in F^\times$ of classes in $H^1(F,\mu_m)$ the value
$\rhokahna{A,F}([\zeta]) \in H^4_{m}(F)\cong H^4(F,\mu_{m}^{\otimes 4})\cong K^M_4(F)/m$ (for $A=(a,b)_n\otimes (c,d)_n$).  

Using a full description of all possibles invariants of $B$ into $H$ of \cite[Prop. 2.1 \& \S 3.1]{garibaldi} and
\cite[Ex. 16.5]{cohinv}, we find that 
$\rhokahna{T,F}([\zeta])$ can be written in $K_4(F)/m$ as sum of pure symbols of the form $\lambda \{z_1,z_2,z_3,z_4\}$ where $\lambda$ is an integer
and each $z_i$ is either a $t_j$ either an element of $k$.  We  prove that only $\{t_1,t_2,t_3,t_4\}$
occurs.  By specializing $t_1$ to $1$, we obtain $T_1=(1,t_2)_n\otimes (t_3,t_4)_n$
from $T$.  But then $\SKb_1(T_1)=0$ by Wang's theorem so that $\rhokahna{T_1,F}([\zeta])=0$.  This induces that 
for all (non-trivial) pure symbols $\{z_1,z_2,z_3,z_4\}$ appearing in $\rhokahna{T,F}([\zeta])$ one of the 
$z_i$ has to equal $t_1$ (as the other ones are zero by the specialisation above). Three other specialisations
give the result.
\end{proof}

\begin{rem}
In wild characteristics (i.e. when $p\,|\, n$), a formula as above does not make sense as there
are no non-trivial $p^2$-th roots of unity.  So similar as in \S \ref{sec:nontrivbiquat},
we cannot generalise this formula to wild invariants by means of a lift.
\end{rem}

\subsubsection{Non-triviality of factor}  We  prove the non-triviality of the factor appearing in Proposition \ref{prop:centre}.
This  uses the non-triviality of $\rhokahn$ for  Platonov's examples (Theorem \ref{thm:nottriv}).  First we recall
some  notions related to tori.  See  \cite{requivtores} as a reference for more details.

Denote for a finite separable field extension $K$ of $k$ by $R_{K/k}(\Gb_m)$ the torus obtained by Weil restriction
of scalars from $K$ to $k$.  Denote furthermore the kernel of the multiplication map 
$R_{K/k}(\Gb_m)\to \Gb_{m,k}$ by $R^1_{K/k}(\Gb_m)$  and the cokernel of the injection $\Gb_{m,k}\to R_{K/k}(\Gb_m)$ by $R_{K/k}(\Gb_m)/\Gb_m$.  Furthermore for any $k$-torus $T$, we denote by $T(k)/R$ the 
$R$-equivalence classes of $T(k)$.  The dual $\hat{T}$ of a $k$-torus $T$ is the character group $\Hom(T,\Gb_m)$.  The dual of $R_{K/k}(\Gb_m)$ is clearly the free abelian group $\Zb[\Gamma]$ for $\Gamma=\Gal(K/k)$. 
The dual of $R^1_{K/k}(\Gb_m)$ is
then $J_\Gamma$, the cokernel of the norm: 
\[\Zb\to \Zb[\Gamma]: a \mapsto \sum_{\gamma_i\in \Gamma} a \gamma_i. \]
The dual of $R_{K/k}(\Gb_m)/\Gb_m$ is the kernel $I_\Gamma$ of the augmentation map:
\[\Zb[\Gamma]\to \Zb:\sum_{\gamma_i\in \Gamma} n_i \gamma_i \mapsto \sum_{\gamma_i \in \Gamma} n_i. \]

Recall that a $k$-torus $F$ is called \textit{flabby} (\textit{flasque}) if $\hat{F}$ is a flabby $\Gamma_k$-module, i.e. $\Ext^1(\hat{F},P)=0$
for any permutation $\Gamma_k$ module $P$  (for equivalent definitions see ibid., Lem. 1).
A flasque resolution of a $k$-torus $T$ is an exact sequence of $k$-tori
\[ 0 \to S \to E \to T \to 0 \]
with $E$ quasi-trivial (i.e. $\hat{E}$ is a permutation module) and $S$ flabby.  This always exists and if $T$ is split
by a field extension $K$, then $E$ and $S$ can also be chosen to be split by $K$.

\begin{thm}
Let $k$ be a $p$-adic field containing a $n^3$-th primitive root unity.  Suppose
$A=(a,t_1)_n\otimes (c,t_2)_n$ is a division $k((t_1))((t_2))$-algebra, then 
\[ \rhokahna{A,k}([\zeta]) = \varphi \left[\lambda\,  h^4_{n}(\{ a,t_1,c,t_2\})\right] \in H^4_{n^2}(k) \]
for $\zeta$ an $n^2$-th primitive root of unity and an integer $\lambda \not\equiv 0 \mod \overline{n^2}$ (and $\varphi$ as
in Proposition \ref{prop:centre}).
A fortiori, $j(0,n)\not\equiv 0 \mod \overline{n^2}$ for any $n$.
\end{thm}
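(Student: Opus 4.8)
First I would observe that the stated equality is just Proposition~\ref{prop:centre} in this situation. A $p$-adic field has characteristic $0$, and since $k\supset\mu_{n^3}$ both $k$ and $F:=k((t_1))((t_2))$ contain a primitive $n^2$-th root of unity $\zeta$; moreover $A=(a,t_1)_n\otimes(c,t_2)_n$ is a product of two symbol $F$-algebras. So Proposition~\ref{prop:centre} with $p=0$ yields, with $m:=\overline{n^2}$ and $\lambda:=j(0,n)$,
\[ \rhokahna{A,F}([\zeta])=\varphi\bigl[\,j(0,n)\,h^4_{m}(\{a,t_1,c,t_2\})\,\bigr]\in H^4_{n^2}(F), \]
which is the asserted formula (here $h^4_m=h^4_n$ precisely when $n$ is squarefree, as then $\overline{n^2}=n$). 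Everything thus reduces to $j(0,n)\not\equiv0\bmod\overline{n^2}$; since $j(0,n)$ depends only on $n$, it suffices to check this using a single triple $(k,F,A)$, and such a triple exists for every $n$ (take $k=\Qb_p(\mu_{n^3})$ with $p\nmid n$ and $a,c\in k^\times$ generating $k^\times/(k^\times)^n\cong(\Zb/n)^2$, so that Platonov's construction applies and $A$ is a division $F$-algebra with $\SKb_1(A)(F)\cong\Zb/n$).

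Next I would reduce the non-vanishing of $\lambda$ to a fact about the invariant. By the computation of \S \ref{sec:platonov} \ref{sec:galcoh}, which holds verbatim over a finite extension of $\Qb_p$, the group $H^4_m(F)$ is cyclic of order $\overline{n^2}$, hence annihilated by $\overline{n^2}$; so if $j(0,n)\equiv0\bmod\overline{n^2}$ the right-hand side of the displayed formula vanishes. It is therefore enough to prove $\rhokahna{A,F}([\zeta])\neq0$. For this I would recall from the proof of Theorem~\ref{thm:nottriv} that $\rhokahna{A,F}$ is non-trivial on $\SKb_1(A)(F)$: Proposition~\ref{thm:mult} applied to the moderate invariant $\rhosusoud$ gives $\tilde m_2(\rhosusouda{A})=i_{\text{S91}}(0,n)\,\rhokahna{A}$ in $\Inv^4(\SKb_1(A),\Hcal^\ast_{n^2})$, hence $m_2\circ\rhosusouda{A,F}=i_{\text{S91}}(0,n)\cdot\rhokahna{A,F}$ as maps $\SKb_1(A)(F)\to H^4_{n^2}(F)$; since $m_2$ is the canonical injection on $H^4_{n^2,A^{\otimes2}}(F)$ and $\rhosusouda{A,F}$ is non-trivial by Suslin \cite[Thm.~4.8]{sk1niettriveng}, so is $\rhokahna{A,F}$.

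It then remains to see that this non-triviality is witnessed by the class $[\zeta]$. As $\SKb_1(A)(F)\cong\Zb/n$ is cyclic and $\rhokahna{A,F}$ is additive, $\rhokahna{A,F}([\zeta])\neq0$ as soon as $[\zeta]$ generates $\SKb_1(A)(F)$. Now $\zeta^n$ is a primitive $n$-th root of unity and is a commutator of the standard generators in each symbol factor, so $[\zeta^n]=0$ and the order of $[\zeta]$ divides $n$. To show the order is exactly $n$ I would use the $R$-equivalence description of $\SKb_1(A)(F)$ together with a flasque resolution of the relevant norm-one torus, as in the material recalled before the statement, tracking the class of $\zeta$ through Platonov's isomorphism $\SKb_1(A)(F)\cong\Br(K/k)/(\Br(K_1/k)\Br(K_2/k))$ and checking that it lands on a generator. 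Granting this, $\rhokahna{A,F}([\zeta])\neq0$, hence $j(0,n)\not\equiv0\bmod\overline{n^2}$ for the chosen triple and thus for every $n$, which is the final assertion of the theorem; and then the displayed formula holds with $\lambda=j(0,n)\not\equiv0\bmod\overline{n^2}$.

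The step I expect to be the main obstacle is the last one: showing that $[\zeta]$ is a genuine generator of $\SKb_1(A)(F)\cong\Zb/n$, and not merely a non-zero element of a priori unknown order. This is exactly the point that forces the detour through Platonov's $R$-equivalence computation and the flasque-resolution formalism for norm-one tori; once it is in hand, the remaining steps follow formally from Proposition~\ref{prop:centre}, Proposition~\ref{thm:mult}, and the proof of Theorem~\ref{thm:nottriv}.
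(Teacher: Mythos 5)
Your outline matches the paper's strategy step by step: reduce the displayed formula to Proposition~\ref{prop:centre}, reduce non-vanishing of $j(0,n)$ to $\rhokahna{A}([\zeta])\neq 0$, recall from Theorem~\ref{thm:nottriv} that $\rhokahn$ is non-trivial on $\SKb_1(A)\cong\Zb/n$, and then isolate the crux: that $[\zeta]$ is a generator. You also correctly identify the tools the paper uses (Platonov's identification of $\SKb_1(A)$ with $\hat H^{-1}(\Gamma,L^\times)=T(k)/R$ for $T=R^1_{L/k}(\Gb_m)$, followed by a flasque resolution). But you stop exactly where the real work begins. The sentence ``I would use the $R$-equivalence description \ldots tracking the class of $\zeta$ \ldots and checking that it lands on a generator'' is not a proof; it is a description of the problem. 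In the paper, this step occupies the entire second half of the proof and is not formal: one must show the canonical map $\mu_{n^2}(k)\to T(k)/R$ is \emph{surjective}. That is done by identifying $T(k)/R$ with $H^1(k,S)$ for a flasque $S$, invoking the Nakayama--Tate perfect pairing $H^1(k,S)\times H^1(k,\hat S)\to\Qb/\Zb$, building a commutative diagram of pairings over $\Br(L/k)\cong\Zb/n^2$ relating $T(k)$, $\mu_{n^2}(k)$, $\hat S$ and $\hat T$, and then reducing surjectivity to the injectivity of $H^2(\Gamma,\hat T)\to H^2(\Gamma,\Zb/n^2)$, which follows from the exact sequence $0\to\mu_{n^2}\to T\to R_{L/k}(\Gb_m)/\Gb_m\to 0$, the factorisation through $R_{L/k}(\Gb_m)$, and Shapiro's Lemma. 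None of that duality-and-diagram-chase content is present in your write-up, and it is precisely where a wrong guess would go undetected: there is no a priori reason the map $\mu_{n^2}(k)\to T(k)/R$ should be onto, and in fact the argument genuinely uses that the residue field is $p$-adic (so that $\Br(L/k)$ is cyclic of the right order) and the specific extension $L=k(\sqrt[n]{a},\sqrt[n]{b})$.

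Two smaller points. First, your passing remark that the order of $[\zeta]$ automatically divides $n$ because $\zeta^n$ ``is a commutator of the standard generators'' is overclaimed for a general degree-$n$ symbol algebra (the standard generators conjugate by a primitive $n$-th root of unity, but $\zeta^n$ being an actual commutator requires an argument); however this is harmless, since the a priori bound $n\cdot[\zeta]=0$ already follows from $\SKb_1(A)(F)\cong\Zb/n$. Second, the aside about $h^4_m=h^4_n$ ``when $n$ is squarefree'' is correct but irrelevant to closing the argument. The genuine gap is the surjectivity of $\mu_{n^2}(k)\to T(k)/R$; everything else in your proposal is sound and tracks the paper.
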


\begin{proof}
We know by Theorem \ref{thm:nottriv} that $\rhokahna{A}:\SKb_1(A)(k) \to H^4_{n^2}(k)$ is not trivial and
moreover $\SKb_1(A)(k)=\Zb/n$ and $H^4_{n^2}(k)\cong \Zb/{n^2}$.  We  prove that the image of
$\mu_{n^2}(k)\cong \Zb/n^2$ inside  $\SKb_1(A)(k)$ is all of $\SKb_1(A)(k)$.  In that case, $\rhokahna{A}([\zeta])$ is not
trivial in $H^4_{n^2}(k)$ (and in $H^4_{\overline{n^2}}(k)\cong \Zb/\overline{n^2}$) so that $j(0,n)\not\equiv 0 \mod \overline{n^2}$.

To prove the statement, let $L=k(\sqrt[n]{a},\sqrt[n]{b})$ and $\Gamma=\Gal(L/k)\cong \Zb/n \times \Zb/n$.  Then
by taking residues on $k((t_1))((t_2))$ with respect to $t_1$ and $t_2$, Platonov proves 
$\SKb_1(A)(k)\cong \hat{H}^{-1}(\Gamma,L^\times)$ where the cohomology group is a Tate cohomology group (see e.g. \cite[Def. 6.2.4]{weibel}) - also use \cite[Thms. 4.17 \& 5.7]{sk1niettriveng} and \cite[(6.15)]{wadsworth}).   On the other hand,
$\hat{H}^{-1}(\Gamma,L^\times)=T(k)/R$ for $T=R^1_{L/k}(\Gb_m)$ \cite[Prop. 15]{requivtores}.  The resulting isomorphism
$\SKb_1(A)(k)\cong T(k)/R$ is a specialisation morphism (in $t_1$ and $t_2$) \cite[(6.9) \& (6.10)]{wadsworth} 
so that the composite $\mu_{n^2}(k)\to \SKb_1(A)(k)\cong T(k)/R$ is the canonical morphism $\mu_{n^2}(k)\to T(k)/R$.
It suffices  to prove that the latter is surjective.

First take a flabby resolution $1\to S \to E \to T \to 1$ of $L$-split tori, 
then $H^1(k,S)=T(k)/R$ (loc. cit., Thm. 2).  The evaluation morphism $S\times \hat{S}\to \Gb_m$
induces a perfect pairing \cite{nakayamadual,tatedual}:
\[ H^1(k,S)\times H^1(k,\hat{S})\to H^2(k,\Gb_m) \cong \Qb/\Zb. \]
Moreover $H^1(k,S)\cong H^1(\Gamma,S(L))$ as this follows from the inflation-restriction exact sequence \cite[3.3.14]{gilleszam}
and $H^1(L,S)=0$.  The pairing above can be modified to a pairing
\[ H^1(\Gamma,S(L))\times H^1(\Gamma,\hat{S}(L))\to \Br(L/k) \cong \Zb/n^2\Zb. \]
Note that 
$\mu_{n^2}(k)\subset T$ so that we get a dual map $\hat{T}\to \Zb/n^2\Zb$.
Using the flabby resolution and the pairing  $T(k)\times \hat{T}(L) \to L^\times$,  
we get the following commutative diagram of pairings:
\[
 \xymatrix@C-25pt{
 H^1(k,S)  & \times & H^1(k,\hat{S}) \ar[d]^{\cong} \ar[rrrrrr] & & & & & & H^2(k,\Gb_m) \cong \Qb/\Zb \\
 H^1(\Gamma,S(L)) \ar[u]^{\cong} & \times & H^1(\Gamma,\hat{S}(L)) \ar[d] \ar[rrrrrr]& & & & & & \Br(L/k) \ar@{^{(}->}[u] \ar@{=}[d] \\
T(k) \ar[u] & \times & H^2(\Gamma,\hat{T}(L)) \ar[d] \ar[rrrrrr]& & & & & & \Br(L/k) \ar@{=}[d] \\
\mu_{n^2}(k) \ar[u] & \times & H^2(\Gamma,\Zb/n^2) \ar[rrrrrr]& & & & & & \Br(L/k).
}
\]
The bottom pairing is perfect as $\mu_{n^2}(k)\cong \Zb/n^2$; 
note that the bottom square comes from the compatibility of the pairings
\[
 \xymatrix@C-25pt{
T(k) & \times & \hat{T}(L) \ar[d] \ar[rrrrrr]& & & & & & L^\times \ar@{=}[d] \\
\mu_{n^2}(k) \ar[u] & \times & \Zb/n^2 \ar[rrrrrr] & & & & & & L^\times.
}
\]
As $H^1(k,S)=T(k)/R\cong \Zb/n$, to prove the surjectivity of $\mu_{n^2}\to T(k)/R$ it suffices
to prove the injectivity of $H^1(k,\hat{S})\to H^2(\Gamma,\Zb/n^2)$.  Since $H^1( \Gamma,\hat{E}(L))=0$,
this comes down to proving the injectivity of $H^2(\Gamma,\hat{T})\to H^2(\Gamma,\Zb/n^2)$.  
This morphism fits into an exact sequence
\[ H^2(\Gamma,I_\Gamma) \to H^2(\Gamma,\hat{T})\to H^2(\Gamma,\Zb/n^2)  \]
because of the exact sequence of group functors
\[ 0 \to \mu_{n^2} \to T \to R_{L/k}(\Gb_m)/\Gb_m \to 0. \]
Clearly $T \to R_{L/k}(\Gb_m)/\Gb_m$ factors through $R_{L/k}(\Gb_m)$, so that 
$H^2(\Gamma,I_\Gamma) \to H^2(\Gamma,\hat{T})$ factors through 
$H^2(\Gamma,\Zb[\Gamma])$ which is trivial by Shapiro's Lemma.  This proves the desired injectivity.
\end{proof}

\begin{rem}
 Note that the proof also defines an invariant of the torus $T$ with values inside $H^4_{n^2}(k)$.
\end{rem}

From this we get 
the following corollary.

\begin{corr}
Let $k$ be a field containing an $l^2$-th primitive root of unity (for $l\neq \car( k)$ any prime) and let $A=(a,b)_l\otimes (c,d)_l$ be a product of two symbol algebras.
If  $\{a,b,c,d\}\neq 0\in K^M_4(k)/l$, then $\SKb_1(A)\neq 0$.
\end{corr}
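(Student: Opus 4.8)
The plan is to read off the corollary from the centre formula of Proposition~\ref{prop:centre}, applied with $n=l$, together with the non-triviality of $j(0,n)$ established in the theorem immediately preceding this corollary. Write $p=\car(k)$, so $p=0$ or $p$ is a prime, and $p\neq l$. Since $l$ is prime, $\overline{l^2}=l$, and that theorem gives $j(0,l)\not\equiv 0\bmod l$, i.e.\ $j(0,l)$ is invertible modulo $l$, hence also modulo $l^2$ (the only prime dividing $l^2$ being $l$). When $p>0$ the same holds for $j(p,l)$: lift $A$ to a product of two symbol $K$-algebras over the fraction field $K$ of a $p$-ring and compare $\rhokahnga{A}$ with $\rhokahna{A}$ inside the cyclic group $\Inv^4(\SKb_1(A),\Hcal^\ast_{l^2})$, exactly as in the remark following Proposition~\ref{thm:compwild} (alternatively, rerun the proof of the preceding theorem over a local field of characteristic $p$ such as $\Fb_q((x))$ with $l^3\mid q-1$). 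In all cases $j:=j(p,l)$ is invertible modulo $l^2$.

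Next I would fix a primitive $l^2$-th root of unity $\zeta\in k$ and let $[\zeta]$ denote its class in $\SKb_1(A)(k)$. Applying Proposition~\ref{prop:centre} with $n=l$ — so that $m=\overline{l^2}=l$ and the factor $n/m$ in the accompanying remark equals $1$ — gives
\[ \rhokahna{A,k}([\zeta]) \;=\; \varphi\bigl[\,j\,h^4_{l}(\{a,b,c,d\})\,\bigr] \;=\; j\,h^4_{l^2}(\{a,b,c,d\}) \;=\; h^4_{l^2}\bigl(j\{a,b,c,d\}\bigr)\ \in\ H^4_{l^2}(k), \]
using that $\varphi\circ h^4_{l}$ agrees with $h^4_{l^2}$ on symbols. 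It then suffices to check that the right-hand side is non-zero. As $\{a,b,c,d\}\neq 0$ in $K^M_4(k)/l$, it is non-zero in $K^M_4(k)/l^2$ as well, since $l^2K^M_4(k)\subseteq lK^M_4(k)$; and multiplication by the unit $j$ is an automorphism of $K^M_4(k)/l^2$, so $j\{a,b,c,d\}\neq 0$ in $K^M_4(k)/l^2$. Finally, by the Bloch--Kato conjecture the Galois symbol $h^4_{l^2}\colon K^M_4(k)/l^2\to H^4_{l^2}(k)$ is an isomorphism (the twist $\mu_{l^2}^{\otimes 4}$ being trivial because $\mu_{l^2}\subseteq k$), whence $\rhokahna{A,k}([\zeta])\neq 0$. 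Since an invariant vanishes at the trivial element, this forces $[\zeta]\neq 0$ in $\SKb_1(A)(k)$, so $\SKb_1(A)\neq 0$; note this in turn forces $\ind_k(A)=l^2$, the index being square-free otherwise and $\SKb_1$ then trivial by Wang's theorem.

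The only genuinely new ingredient is the fact that $j$ is prime to $l$; everything else — the identification of the twists $\mu_{l^2}^{\otimes i}$ with $\Zb/l^2$, the meaning of the transfer map $\varphi\colon H^4_l(k)\to H^4_{l^2}(k)$, and the normalisation of $j(p,n)$ — is already packaged into Proposition~\ref{prop:centre} and its remark. The step deserving real care is the positive-characteristic case, where the non-triviality of $j(p,l)$ must be obtained through the lifting argument sketched above rather than directly from the preceding theorem; this is where I expect the main (though mild) obstacle to lie.
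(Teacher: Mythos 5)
Your argument for $\car(k)=0$ is correct and matches the paper's: the preceding theorem gives $j(0,l)\not\equiv 0\bmod l$, and since $\overline{l^2}=l$ this means $j(0,l)$ is a unit mod $l^2$; then $\{a,b,c,d\}\notin lK^M_4(k)\supset l^2K^M_4(k)$ together with Bloch--Kato forces $h^4_{l^2}(\{a,b,c,d\})\neq 0$, and you conclude.

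The gap is in the positive-characteristic case, where you assert ``in all cases $j:=j(p,l)$ is invertible modulo $l^2$'' without actually establishing it. Your first suggested route --- comparing $\rhokahnga{A}$ with $\rhokahna{A}$ inside the cyclic invariant group ``as in the remark following Proposition~\ref{thm:compwild}'' --- does not deliver this. That remark only produces, under the extra hypothesis $\tilde\rho_A=\rho_A$, a relation of the form $i(p,n)=i(0,n)\cdot d$ for some unknown integer $d$; transposed to $j$, you would get at best $j(0,l)\,h^4_{l^2}(\{a,b,c,d\})=d\,j(p,l)\,h^4_{l^2}(\{a,b,c,d\})$, and nothing rules out $d\equiv 0\bmod l$, which would leave $j(p,l)$ uncontrolled. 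Your alternative (rerunning the non-triviality theorem over $\Fb_q((x))$ with $l^3\mid q-1$) would work in principle --- Tate--Nakayama duality and the local-field computations it uses do hold in equal characteristic --- but you do not carry it out, and it amounts to re-proving a substantial theorem.

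The paper avoids the issue entirely: for $\car(k)=p>0$ it does not try to show $j(p,l)\neq 0$. Instead, it lifts $A$ to $B=(\tilde a,\tilde b)_l\otimes(\tilde c,\tilde d)_l$ over $K=\Frac(R)$ and uses the lifting construction of Theorem~\ref{thm:lift} to transfer the centre formula with the \emph{characteristic-zero} constant $j(0,l)$ directly to the \emph{wild} invariant: $\rhokahnga{A,k}([\zeta])=\varphi\bigl[j(0,l)\,h^4_{l,k}(\{a,b,c,d\})\bigr]$. (The key compatibility is that the injection $H^4_{l^2}(k)\hookrightarrow H^4_{l^2}(K)$ of \eqref{eq:splitting} sends $\varphi[h^4_{l,k}(\{a,b,c,d\})]$ to $\varphi[h^4_{l,K}(\{\tilde a,\tilde b,\tilde c,\tilde d\})]$, via the analogous splitting in Milnor $K$-theory.) Since $j(0,l)$ is already known to be a unit mod $l$ and the symbol is non-zero, $\rhokahnga{A,k}([\zeta])\neq 0$, hence $[\zeta]\neq 0$ in $\SKb_1(A)(k)$. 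No statement about $j(p,l)$ is needed. I would rework your positive-characteristic paragraph to use $\rhokahnga{A}$ and $j(0,l)$ in this way, and drop the unsubstantiated claim about $j(p,l)$.
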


\begin{proof}
For a field $k$ of characteristic 0, the corollary follows from
the previous theorem.

Let $k$  be a field of $\car(k)=p>0$ and let $l\neq p$ be a prime and assume $k$ to contain an $l^2$-th primitive root $\zeta\in \kbar$.  Take any product of two symbol $k$-algebras $A=(a,b)_l \otimes (c,d)_l$
for $a,b,c,d\in k^\times$. Let $R$ be a $p$-ring with residue field $k$ and fraction field $K$.  Then $A$
lifts to the central simple $K$-algebra $B=(\tilde{a},\tilde{b})_l \otimes (\tilde{c},\tilde{d})_l$ where $\tilde{a},\tilde{b},\tilde{c},\tilde{d}$ are lifts from $a,b,c,d \in R$.  Under the injection
$H^4_{l^2}(k)\to H^4_{l^2}(K)$ induced by \eqref{eq:splitting}, $\varphi \bigl[h^4_{l,k}(\{a,b,c,d\})\bigr]$ is sent
to $\varphi \bigl[ h^4_{l,K}(\{\tilde{a},\tilde{b},\tilde{c},\tilde{d}\})\bigr]$ (with an abuse of notation for $\varphi$ from Proposition \ref{prop:centre}).  This follows from a same splitting for Milnor's K-Theory \cite[Lem. 2.6]{milnor}.

As $\rhokahna{B,K}([\tilde{\zeta}])=\varphi \bigl[j(0,l)\, h^4_{l,K}(\{\tilde{a},\tilde{b},\tilde{c},\tilde{d}\})\bigr]$ (for a lift $\tilde{\zeta}\in \mu_{l^2}(K)$ of $\zeta$), we
find from the construction in Theorem \ref{thm:lift} that
$\rhokahnga{A,k}([\zeta]) = \varphi \bigl[j(0,l)\, h^4_{l,k}(\{a,b,c,d\})\bigr]$.  On the other hand, as Kahn's invariant generates the
invariant group
(\S \ref{sec:defkahn} \ref{sec:cyclic}),  there is an integer $d$ such that
$\rhokahnga{A}= d\, \rhokahna{A}$. From this the result follows.
\end{proof}

By the proof, it even suffices to prove the non-triviality of the symbols  in characteristic zero to obtain the non-triviality of the symbols in moderate positive characteristic.  (Use a $p$-ring and the splitting in Milnor's K-Theory (loc. cit.)).

\fontsize{10}{12} \selectfont
\def\ccprime{$'$}
\addcontentsline{toc}{section}{Bibliography}

\end{document}